\renewcommand{\theequation}{\thesection.\arabic{equation}}
\def\eqnarray{%
\stepcounter{equation}%
\let\@currentlabel=\theequation
\global\@eqnswtrue
\global\@eqcnt\z@
\tabskip\@centering
\let\\=\@eqncr
$$\halign to \displaywidth\bgroup\@eqnsel\hskip\@centering
$\displaystyle\tabskip\z@{##}$&\global\@eqcnt\@ne
\hfil$\displaystyle{{}##{}}$\hfil
&\global\@eqcnt\tw@$\displaystyle\tabskip\z@{##}$\hfil
\tabskip\@centering&\llap{##}\tabskip\z@\cr}
\def\bbbz{{\mathchoice {\hbox{$\sf\textstyle Z\kern-0.4em Z$}}
{\hbox{$\sf\textstyle Z\kern-0.4em Z$}}
{\hbox{$\sf\scriptstyle Z\kern-0.3em Z$}}
{\hbox{$\sf\scriptscriptstyle Z\kern-0.2em Z$}}}}
\def\bbbq{{\mathchoice {\setbox0=\hbox{$\displaystyle\rm Q$}\hbox{\raise
0.15\ht0\hbox to0pt{\kern0.4\wd0\vrule height0.8\ht0\hss}\box0}}
{\setbox0=\hbox{$\textstyle\rm Q$}\hbox{\raise
0.15\ht0\hbox to0pt{\kern0.4\wd0\vrule height0.8\ht0\hss}\box0}}
{\setbox0=\hbox{$\scriptstyle\rm Q$}\hbox{\raise
0.15\ht0\hbox to0pt{\kern0.4\wd0\vrule height0.7\ht0\hss}\box0}}
{\setbox0=\hbox{$\scriptscriptstyle\rm Q$}\hbox{\raise
0.15\ht0\hbox to0pt{\kern0.4\wd0\vrule height0.7\ht0\hss}\box0}}}}
\def\bbbc{{\mathchoice {\setbox0=\hbox{$\displaystyle \rm C$}\hbox{\raise
0.06\ht0\hbox to0pt{\kern0.4\wd0\vrule height0.9\ht0\hss}\box0}}
{\setbox0=\hbox{$\textstyle\rm C$}\hbox{\raise
0.06\ht0\hbox to0pt{\kern0.4\wd0\vrule height0.9\ht0\hss}\box0}}
{\setbox0=\hbox{$\scriptstyle\rm C$}\hbox{\raise
0.06\ht0\hbox to0pt{\kern0.4\wd0\vrule height0.8\ht0\hss}\box0}}
{\setbox0=\hbox{$\scriptscriptstyle\rm C$}\hbox{\raise
0.06\ht0\hbox to0pt{\kern0.4\wd0\vrule height0.8\ht0\hss}\box0}}}}
  \renewcommand{\theequation}{%
 \thesection.\arabic{equation}}
\newtheorem{theorem}{Theorem}[section]
\newtheorem{lemma}[theorem]{Lemma}
\newtheorem{corollary}[theorem]{Corollary}
\newtheorem{proposition}[theorem]{Proposition}
\newtheorem{remark}[theorem]{Remark}
\newsavebox{\toy}
\savebox{\toy}{\framebox[0.65em]{\rule{0cm}{1ex}}}
\newcommand{\QED}{\usebox{\toy}}
\def\nlni{\par\ifvmode\removelastskip\fi\vskip\baselineskip\noindent}
\newenvironment{proof}{\nlni\begingroup\it Proof.\rm}{
\endgroup\vskip\baselineskip}
\begin{document}
%%%%%%% DOUBLE SPACED %%%%%%%%
\setlength{\baselineskip}{15pt}
\title{
Elementary properties of circle map sequences
}
\author{Fumihiko Nakano
\thanks{Faculty of Science, 
Department of Mathematics and Information Science,
Kochi University,
2-5-1, Akebonomachi, Kochi, 780-8520, Japan.
e-mail : 
nakano@math.kochi-u.ac.jp}}
\date{}
\maketitle
%%%%%%% ABSTRACT %%%%%%%%%%%%%
\begin{abstract}
We study
the combinatorial and structural properties of the circle map sequences. 
We introduce 
an embedding procedure which gives a map 
$\Phi : \Omega \to W := \{ R, L \}^{\bf N}$
from the hull(closure of the set of translates) to the sequence of embedding operations through which we study the structure of 
$\Omega$. 
We also study 
the set of admissible words and classify them in terms of their appearance. 
\end{abstract}

Mathematics Subject Classification (2000): 52C23

%\tableofcontents
%%%%% INTRODUCTION %%%%% 
\section{Introduction}
The circle map 
$v_0 \in \{ 0, 1 \}^{\bf Z}$
of rotation number 
$\alpha \in (0,1) \cap {\bf Q}^c$
is defined by
\[
v_0 (n) :=
1_{[1 - \alpha, 1)} (n \alpha \mbox{ mod }1), 
\quad
n \in {\bf Z}.
\]
We first recall its basic properties \cite{L}. 
Let
\begin{displaymath}
\alpha=
[a_1, a_2, \cdots]
:=
\displaystyle{\frac{1}
{a_{1}+\displaystyle{\frac{1}
{a_{2}+\displaystyle{{}_{\ddots}}}}}}, 
\quad
\alpha_n := [a_1, a_2, \cdots, a_n] = \frac {p_n}{q_n}
\end{displaymath}
be the continued fraction expansion of 
$\alpha$
and its rational approximation
($a_n \in {\bf N}$
and
$p_n$, $q_n$
are relatively prime). 
$p_n$
and 
$q_n$
satisfy
\begin{eqnarray}
\cases{
p_{n+1} = a_{n+1} p_n + p_{n-1} & \cr
q_{n+1} = a_{n+1} q_n + q_{n-1} & \cr
}
n \ge 0
\label{pq}
\end{eqnarray}
with
$(p_{-1}, q_{-1})=(1, 0)$, 
$(p_0, q_0) = (0, 1)$. 
Let 
$s_n \in {\cal A}^* := \bigcup_{n \ge 1} \{ 0, 1\}^n$
be the word given recursively by 
\begin{eqnarray*}
s_{-1} = 1, 
\;
s_0 = 0, 
\;
s_1 = s_0^{a_1-1} s_{-1}, 
\;
s_{n+1}= s_n^{a_{n+1}} s_{n-1}, 
\;
n \ge 1.
\end{eqnarray*}
Then 
$s_n$
has length 
$q_n$
and coincides with 
$( v_0 (1), v_0 (2), \cdots, v_0 (q_n) )$
and also coincides with 
$( v_0(-q_n+1), v_0(-q_n+2), \cdots, v_0(-1), v_0(0) )$
if 
$n$
is even ; in other words, 
$( v_0 (n) )_{n \ge 1}$
is the right limit of 
$s_n$
and 
$( v_0 (n) )_{ n \le 0}$
is the left limit of 
$s_{2n}$. 
$s_n$
($n \ge 1$)
can be written as
\[
s_n = \pi_n 
\cases{
(10) & ($n$: even) \cr
(01) & ($n$: odd) \cr
}
\]
where 
$\pi_n$
is a palindrome. 
If 
$\alpha$
is the reciprocal number of the golden number 
($\alpha = \frac {1}{\tau}
:=
\frac {\sqrt{5}-1}{2}
=
[1, 1, \cdots]$), 
then 
$s_1 = 1$, 
$s_2 = 10$, 
$s_3=101$, 
$s_4 = 10110, \cdots$
and 
$v_0$
is called the Fibonacci word which is thoroughly studied. 
We give 
the topology of pointwise convergence on 
$\{ 0, 1 \}^{{\bf Z}}$
(the product topology of the discrete topology on 
$\{ 0, 1\}$)
and let
\[
\Omega := 
\mbox{ closure of }
\{ v_0 (\cdot - m) \}_{m \in {\bf Z}}
\]
which is called the hull of 
$v_0$
and has the following representation. 
\begin{eqnarray}
\Omega &=& 
\{ v_{\theta} \}_{\theta \in {\bf T}}
\cup
\{ v'_0 (\cdot - m) \}_{m \in {\bf Z}}
\label{hull}
\\
v_{\theta} (n) &:=&
1_{[1 - \alpha, 1)} (n \alpha + \theta \mbox{ mod }1), 
\quad
\theta \in {\bf T}, 
\nonumber
\\
v'_0 (n) &:=&
1_{ (1 - \alpha, 1] } (n \alpha \mbox{ mod }1). 
\nonumber
\end{eqnarray}
Circle map sequences have the property that
(1) minimal complexity,
and
(2) aperiodic and balanced.
Actually, 
these three conditions are mutually equivalent \cite{MH}, and for that reason circle map sequences are sometimes called Sturmian sequences. 

The purpose of this paper 
is to study some elementary properties of 
$v_0$. 
In section 2, 
we consider Fibonacci word and introduce an ``embedding procedure" to construct elements of 
$\Omega$
to study the the combinatorial properties of 
$v_0$. 
This is
essentially a special case of the ``desubstitution" 
\cite{F, L}, 
which is studied well, 
though the formulation given here is slightly different.
We review
the relationship between this embedding and the two interval exchange dynamical system inheriting in the Fibonacci word, 
by which we study property of a measure on 
${\bf T}$
induced by a random embedding. 

In section 3, 
we consider the set of admissible words of 
$v_0$ 
and study how they distribute in 
$v_0$.
We classify 
them in terms of their occurrence in 
$v_0$
and compute their frequency. 
As is discussed (in more general context) in \cite{AB}, 
this classification gives us an alternative proof of the three-distance theorem\cite{Sos}.
In Appendix 1, 
we collect some basic properties of the embedding procedure. 
In Appendix 2, 
we discuss a combinatorial property of the circle map sequence which follows easily from the embedding procedure. 

In what follows, 
the definition of notation 
$|A|$
for a set 
$A$
should be clear from the context : 
it means the number of its elements if 
$A \subset {\bf Z}$, 
while it means the Lebesgue measure if 
$A \subset {\bf R}$. 
%

%%%%%
\section{An embedding procedure}
In this section, 
we consider the case of Fibonacci word : 
$a_n = 1$. 
We first 
define the ``embedding procedure". 
\subsection{Definition}
We first 
explain the motivation of considering this procedure.
Since we have 
$s_{n+1} = s_n s_{n-1}$
in Fibonacci word, 
it is possible to embed
$s_k$ 
to a larger 
$s_{k'}$
by either of the following two operations. 
\begin{eqnarray*}
(1) \quad R:\qquad
&s_n&
\\
\mapsto s_{n+1} := &s_n& s_{n-1},
\\
(2) \quad L :\qquad
&s_{n}&
\\
\mapsto s_{n+1} &s_{n}& =: s_{n+2}, 
\end{eqnarray*}
After infinitely many operations, we will have an element of 
$\Omega$. 
The converse 
will turn out to be true : every 
$v \in \Omega$
is obtained by this procedure.
Utilizing this fact, 
we would like to consider an analogue of the ``up-down generation" in the construction of the Penrose tiling. 
To define it properly, 
we first recall the results in 
\cite{Damanik-Lenz}
which applies to any circle map sequences. 
The 
$(n-1, n)$-partition
is the non-overlapping covering of a sequence 
$\{ v(n) \}_{n \in {\bf Z}}$
by two words 
$s_{n-1}$, $s_n$. 
\begin{lemma}\cite{Damanik-Lenz}
\label{partition}
For any 
$n \ge 0$, 
$v \in \Omega$
has unique 
$(n-1, n)$-partition.
\end{lemma}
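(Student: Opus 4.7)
My plan is to handle existence and uniqueness separately, in both steps leveraging the recursion $s_{n+1} = s_n^{a_{n+1}} s_{n-1}$, and using a compactness argument to pass from $v_0$ to arbitrary $v \in \Omega$.

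For existence, I would first exhibit an explicit $(n-1,n)$-partition of $v_0$. Since $s_N = s_{N-1}^{a_N} s_{N-2}$ for $N \ge 2$, an easy induction on $N > n$ shows that every $s_N$ is a concatenation of copies of $s_n$ and $s_{n-1}$ only. Because $(v_0(k))_{k \ge 1}$ is the right limit of $s_N$, this induces an $(n-1,n)$-partition of the positive half; the coincidence of $(v_0(-q_{2m}+1), \dots, v_0(0))$ with $s_{2m}$ for even $m$ handles the negative half, and concatenating gives a partition of all of $v_0$. For arbitrary $v \in \Omega$ one picks translates $v_0(\cdot - m_j) \to v$ in the product topology; each carries a translated copy of the partition, and by a standard compactness/diagonal extraction the cut positions converge (cut nearest to $0$ first, then outward), yielding an $(n-1,n)$-partition of $v$.

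Uniqueness I would prove by induction on $n$. The base case $n = 0$ is trivial, as $s_{-1}$ and $s_0$ are single letters. For the inductive step, suppose every $v \in \Omega$ has a unique $(n-1,n)$-partition and suppose $v$ admits two $(n,n+1)$-partitions $P, P'$. Refining each by replacing every $s_{n+1}$-block with its expansion $s_n^{a_{n+1}} s_{n-1}$ yields two $(n-1,n)$-partitions of $v$, which by induction coincide. Both $P$ and $P'$ thus have cut sets contained in this common refined cut set, and the $(n+1)$-level cuts can be recovered deterministically: in the refined partition, each $s_{n-1}$-block is, by construction, the rightmost factor of some $s_{n+1}$, so one reconstructs every $s_{n+1}$-block by attaching to each $s_{n-1}$ the $a_{n+1}$ immediately preceding $s_n$-blocks; all remaining $s_n$-blocks are free-standing. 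Applying this rule to both $P$ and $P'$ forces $P = P'$.

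The main obstacle is this uniqueness step, whose crux lies in verifying that the reconstruction rule is unambiguous---that is, that in the refined partition each $s_{n-1}$ really is preceded by at least $a_{n+1}$ consecutive $s_n$-blocks and that the additional $s_n$'s that do occur cannot be reassigned to a different $s_{n+1}$. Checking this reduces to the bookkeeping $q_{n+1} = a_{n+1} q_n + q_{n-1}$ together with the observation that in any $(n,n+1)$-partition free-standing $s_n$-blocks occur only in isolation between consecutive $s_{n+1}$-blocks. A cleaner route I might prefer is to bypass the combinatorics entirely by identifying the $(n-1,n)$-partition of $v = v_\theta$ with the return-time decomposition of the orbit $\{m\alpha + \theta\}_{m \in {\bf Z}}$ under rotation by $\alpha$ into the two fundamental intervals of lengths $\|q_{n-1}\alpha\|$ and $\|q_n\alpha\|$, in which case both existence and uniqueness are immediate consequences of the invertibility of the rotation.
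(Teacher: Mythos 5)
The paper offers no proof of this lemma: it is imported verbatim from the cited reference [Damanik--Lenz], so there is nothing internal to compare against. Judged on its own, your argument is correct and is essentially the standard ``desubstitution'' proof. Existence via recursive expansion of $s_N$ into $s_n$- and $s_{n-1}$-blocks anchored at both limits of $v_0$, followed by diagonal extraction over translates, is sound (the cut pattern is a locally checkable condition, so it passes to pointwise limits). The uniqueness induction is the right idea, and the key point you flag---that the refinement map $P\mapsto$ ($(n-1,n)$-partition obtained by expanding each $s_{n+1}$ into $s_n^{a_{n+1}}s_{n-1}$) is injective, because each $s_{n-1}$-block of the refinement sits at the right end of its parent $s_{n+1}$ and is recovered by grabbing the $a_{n+1}$ immediately preceding $s_n$-blocks---does hold by construction; extra free-standing $s_n$'s cannot be misassigned since the rule is deterministic and inverts the refinement. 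Two small points to tidy: the step from the $(-1,0)$- to the $(0,1)$-partition uses the modified recursion $s_1=s_0^{a_1-1}s_{-1}$ (exponent $a_1-1$, not $a_1$, and degenerate when $a_1=1$), so the base of the induction needs its own one-line treatment; and your alternative route via the return-time decomposition of the rotation handles $v_\theta$ directly but still needs a limiting argument for the countably many elements $v_0'(\cdot-m)$ of the hull. Neither issue is structural.
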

\begin{corollary}\cite{Damanik-Lenz}
\label{corollary}
In the 
$(n-1, n)$-partition of 
$v \in \Omega$, \\
(1)
$s_{n-1}$
does not appear consecutively
($s_{n-1}$
is always isolated)\\
(2)
$s_n$
always appears 
$a_{n+1}$
or 
$(a_{n+1} + 1)$
times successively. 
\end{corollary}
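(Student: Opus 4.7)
The plan is to derive the $(n-1,n)$-partition directly from the $(n,n+1)$-partition by means of the identity $s_{n+1}=s_n^{a_{n+1}}s_{n-1}$ (valid for $n\ge 1$), and then to use Lemma \ref{partition} at two consecutive levels. Starting from the $(n,n+1)$-partition of $v\in\Omega$, I would replace every block equal to $s_{n+1}$ by the concatenation of $a_{n+1}$ copies of $s_n$ followed by one copy of $s_{n-1}$, while leaving every standalone $s_n$-block untouched. Since $s_{n+1}$ equals the word $s_n^{a_{n+1}}s_{n-1}$ on the nose, no letters of $v$ are altered; only additional cut points are inserted. The resulting decomposition is therefore a non-overlapping covering of $v$ by $\{s_{n-1},s_n\}$, and by Lemma \ref{partition} it must coincide with the $(n-1,n)$-partition.

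Statement (1) is then immediate. Each $s_{n-1}$-block in the $(n-1,n)$-partition sits at the tail of some expanded $s_{n+1}$-block; whatever block follows it in the $(n-1,n)$-partition is the first block of the next piece of the $(n,n+1)$-partition, i.e.\ either a standalone $s_n$ or the leading $s_n$ of the next expansion. In either case $s_{n-1}$ is immediately followed by $s_n$, and the mirror argument shows it is also immediately preceded by $s_n$. Hence $s_{n-1}$ is always isolated.

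For statement (2) I would invoke (1) at the next level upward: once (1) has been established for every $n$, the $(n,n+1)$-partition has $s_n$ isolated, so between any two $s_{n+1}$-blocks there is either no standalone $s_n$ or exactly one. Translating back through the substitution, a maximal run of consecutive $s_n$'s between two $s_{n-1}$'s in the $(n-1,n)$-partition consists of the trailing $s_n^{a_{n+1}}$ of one expanded $s_{n+1}$ together with either zero or one standalone $s_n$, giving precisely $a_{n+1}$ or $a_{n+1}+1$ successive copies of $s_n$, as claimed.

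The principal point to watch is logical ordering rather than any hard technical obstacle: (1) must be proved for every $n$ before it can be used at level $n+1$ to deduce (2) at level $n$. Once the substitution step and Lemma \ref{partition} are in hand, (1) for all $n$ is unconditional, and the implication ``(1) at level $n+1$ implies (2) at level $n$'' is essentially a counting observation. Edge cases such as $n=0$, where the anomalous initial formula $s_1=s_0^{a_1-1}s_{-1}$ replaces the generic $s_{n+1}=s_n^{a_{n+1}}s_{n-1}$, would need a separate verification, but for all $n\ge 1$ the substitution-plus-uniqueness argument applies uniformly across all circle map sequences.
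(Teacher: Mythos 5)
Your argument is sound, and in fact it supplies more than the paper does: the paper does not prove this corollary at all, but simply imports it from Damanik--Lenz alongside Lemma \ref{partition}. Your route --- refine the $(n,n+1)$-partition by expanding each $s_{n+1}$-block into $s_n^{a_{n+1}}s_{n-1}$, identify the result with the $(n-1,n)$-partition via the uniqueness in Lemma \ref{partition}, read off (1) from the fact that every $s_{n-1}$ sits at the tail of an expansion and is therefore flanked by copies of $s_n$ (using $a_{n+1}\ge 1$), and then count a maximal run of $s_n$'s as $a_{n+1}+m$ with $m\in\{0,1\}$ standalone blocks, where $m\le 1$ is exactly statement (1) one level up --- is the standard derivation, and you handle the logical ordering correctly: (1) is established unconditionally at every level before being invoked at level $n+1$ to prove (2) at level $n$. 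The one caveat you flag deserves to be taken seriously rather than just noted: for $n=0$ the claim (1) can genuinely fail as stated (e.g.\ when $a_1=1$ one has $s_{-1}=1$ and the letter $1$ does occur consecutively in $v_0$), so the corollary should be read for $n\ge 1$, which is precisely the range where your substitution $s_{n+1}=s_n^{a_{n+1}}s_{n-1}$ applies; with that restriction made explicit your proof is complete and works uniformly for all circle map sequences.
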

Let 
\[
W := 
\{ (O_1, O_2, \cdots ) 
\; | \; 
O_j = R \mbox{ or } L 
\}
=
\{ R, L \}^{{\bf N}}.
\]
For given 
$v \in \Omega$, 
we construct the sequence 
$(O_1, O_2, \cdots) \in W$
of operations by the following procedure. 

(i)
When 
$v(0) = 1$, 
$v(0)$
is covered by 
$s_1$
in the 
$(0,1)$-partition. 
Set 
$O_1 = R$.
When 
$v(0) = 0$, 
$v(0)$
is covered by 
$s_2$
in the $(1,2)$-partition, for we have 
$(v_0(-1), v_0(0), v_0(1)) = (1, 0, 1)$.
Set 
$O_1 = L$.
\\
(ii)
Suppose
$v(0)$
is covered by a block 
$\underline{s_n}$
in the $(n-1, n)$-partition after the $k$-th step. 
If we find 
$s_{n-1}$
in the right to 
$\underline{s_n}$
in the $(n-1, n)$-partition, then 
$v(0)$
is covered by 
$s_{n+1}$
in the $(n, n+1)$-partition. 
In this case 
we regard that 
the block 
$\underline{s_n}$
containing 
$v(0)$
grow up to 
$s_{n+1}$
by putting
$s_{n-1}$
to its right end, so that we set 
$O_{k+1} = R$. 
\begin{eqnarray*}
&&\mbox{\fbox{　 $\underline{s_n}$　 }}
\\
&&\quad\downarrow R
\\
&&\mbox{\fbox{　 $s_n$　 }}\mbox{\fbox{$s_{n-1}$}}
\\
&&\qquad ||
\\
&&\mbox{\fbox{　　 $s_{n+1}$　　}}
\end{eqnarray*}
If we find 
$s_n$
in the right to 
$\underline{s_n}$, 
then 
$v(0)$
is still covered by 
$s_n$
in the 
$(n, n+1)$-partition, and 
is then covered by 
$s_{n+2}$
in the 
$(n+1, n+2)$-partition. 
In this case, 
we regard that the block 
$\underline{s_n}$
containing 
$v(0)$
grow up to 
$s_{n+2}$
by putting 
$s_{n+1}$
to its left end, so that we set 
$O_{k+1} = L$.
\begin{eqnarray*}
\mbox{\fbox{　 $s_n$　 }}
\mbox{\fbox{$s_{n-1}$}}
\mbox{\fbox{　 $\underline{s_n}$　 }}&&\mbox{\fbox{　 $s_n$　 }}
\\
L\downarrow\qquad&&
\\
\mbox{\fbox{　　 $s_{n+1}$　　 }}\mbox{\fbox{　 $s_n$　 }}&&
\\
||\qquad\qquad &&
\\
\mbox{\fbox{　　　　 $s_{n+2}$　　　　 }}&&
\end{eqnarray*}
In other words, 
if we find 
$s_{n-1}$
in the right to 
$\underline{s_n}$
in the $(n-1, n)$-partition, then we set 
$O_{k+1} = R$; 
otherwise we find 
$s_{n+1}$
in the left to 
$\underline{s_n}$
in the 
$(n+1, n+2)$-partition, and we set 
$O_{k+1} = L$. 
Hence we have defined a map 
\[
\Phi : \Omega \to W
\]
\begin{remark}
It is possible 
to define this map for any circle map sequences. 
In the n-th level, 
the embedding procedure is given by 
\begin{eqnarray*}
&&R_{(n,k)} : 
s_n
\mapsto s_n^{a_{n+1}} s_{n-1}, 
\quad
k= 1, 2, \cdots, a_{n+1}
\\
&&L_{n} : 
s_n
\mapsto s_{n+1}^{a_{n+2}} s_{n}
\end{eqnarray*}
$R_{(n,k)}$
means to embed 
$s_n$
to the 
$k$-th
$s_n$
in 
$s_{n+1}=s_n^{a_{n+1}} s_{n-1}$. 
This method also applies to the 
period-doubling sequence which is the fixed point of the substitution : 
$1 \mapsto 10$, $0 \mapsto 11$.
\end{remark}
%%%%%%
\subsection{the inverse map}
To see
$\Phi$
is surjective and to find the subset of 
$\Omega$
on which
$\Phi$
is one to one, we study how to reconstruct 
$v \in \Omega$
for given 
$(O_1, O_2, \cdots ) \in W$
($O_j = R \mbox{ or } L$). 

$O_1 = R$ : 
Set 
$v(0) = 1$.
Then
$v(0)$
is covered by 
$s_1$
in the 
$(0,1)$-partition.

$O_1 = L$ : 
Set 
$v(0) = 0$.
Then we have 
$(v(-1), v(0), v(1)) = (1, 0, 1)$
so that 
$v(0)$
is covered by 
$s_2$
in the 
$(1, 2)$-partition.

After the 
$k$-th step, suppose that 
$v(0)$
is covered by 
$\underline{s_n}$
in the 
$(n-1,n)$-partition. 

$O_{k+1} = R$ : 
we put 
$s_{n-1}$
to the right end of  
$\underline{s_n}$
in the 
$(n-1, n)$-partition.
\begin{eqnarray*}
&&
\mbox{\fbox{　 n　 }}
\\
&&\quad
\downarrow R
\\
&&
\mbox{\fbox{　 n 　}}\mbox{\fbox{ n-1 }}
\\
&&
\qquad\quad
||
\\
&&
\mbox{\fbox{　　n+1　 　}}
\end{eqnarray*}
Then
$v(0)$
is covered by 
$s_{n+1}$
in the 
$(n, n+1)$-partition.

$O_{k+1} = L$ : 
we put 
$s_{n+1}$
to the left end of  
$\underline{s_n}$
in the 
$(n, n+1)$-partition.
\begin{eqnarray*}
\mbox{\fbox{ 　n　 }}&&
\\
L\downarrow\quad&&
\\
\mbox{\fbox{ 　　n+1 　　}}\mbox{\fbox{　 n 　}}&&
\\
||\qquad\qquad
&&
\\
\mbox{\fbox{　　　　n+2　　　　　}}&&
\end{eqnarray*}
Then
$v(0)$
is covered by 
$s_{n+2}$
in the 
$(n+1, n+2)$-partition.
We remark that, when 
$v(0)$
is covered by 
$\underline{s_n}$
in the 
$(n-1, n)$-partition, 
a number of letters has been further determined 
to the right of that and thus, in most cases, repeating this procedure determines a bi-infinite sequence
$( v(n) )_{n \in {\bf Z}}$. 
In fact, 
we always find
$\pi_{n+1}$
to the next to 
$\underline{s_n}$, 
since we have either 
$\underline{s_n} s_{n-1}s_n$
($O_{k+1}=R$)
or
$\underline{s_n} s_n s_{n-1}$
($O_{k+1}=L$)
in the 
$(n-1, n)$-partition. 
Because
$s_{n-1} \pi_n = \pi_{n+1}$, 
they are equal to either
$\underline{s_n} \pi_{n+1} (10)$
or 
$\underline{s_n} \pi_{n+1} (01)$.
However if
$O_j = R$
for large 
$j$, 
we have a semi-infinite sequence : 
$(v(n))_{n \ge -N}$ 
for some 
$N$, and 
$(v(n))_{n \le -N-1}$
is not determined. 
In this case 
$(v(n))_{n \ge -N}$
is equal to a translation of 
$(v_0(n))_{n \ge 1}$ : 
$v(-N + n-1) = v_0(n)$, $n \ge 1$. 
So by (\ref{hull}) we set either 
$( v(-N-2), v(-N-1) ) = (1, 0)$
or 
$(0,1)$
and further set
$v(-N-n) = v_0(n-2)$
for 
$n \ge 3$
so that we obtain an element of 
\[
\Omega_R := \{
v_0(\cdot + m), \; v'_0 (\cdot + m)
\; | \; 
m \ge 1 
\}.
\]
Hence, 
$\Phi$
is two to one on 
$\Omega_R$
and one to one elsewhere. 
Under the topology 
of the pointwise convergence on 
$\Omega$
and 
$W$, 
$\Phi$
and
$(\Phi : \Omega_R^c \to \Phi(\Omega_R^c) )^{-1}$
are continuous. 
$\Phi$
has an unique fixed point 
$f :=(L,R,R,L,R,L,R,R,L, \cdots)$
if we identify 
$R, L$
with 
$1, 0$
respectively and 
$v(n)$
with 
$O_{n+1}$. 
\begin{remark}
By this method
we see the correlation (constraint condition) of letters between different sites. 
In fact, if 
$n$
is even, both
$(10) s_n \pi_{n+1} (10)$
and 
$(01) s_n \pi_{n+1} (10)$
are allowed while only 
$(01) s_n \pi_{n+1} (01)$
is possible
(for odd
$n$, 
exchange
$(10)$
with
$(01)$). 
\end{remark}
%
%%%%%
\subsection{Relation to the division of intervals in ${\bf T}$}
Let 
$\Psi : {\bf T} \to \Omega$
be the map 
$\theta \in {\bf T} \mapsto v_{\theta} \in \Omega$.
We consider 
the inverse image of the cylinder set of 
$\Omega$ : 
e.g., 
\begin{equation}
\Psi^{-1}
(\{ v(0) = 1\})
=\left[ \frac {1}{\tau^2}, 1 \right),
\quad
\Psi^{-1}
(\{ v(0) = 0\})
=\left[ 0, \frac {1}{\tau^2} \right).
\label{sharp}
\end{equation}
If we go further, 
each interval is divided into two intervals with ratio 
$\tau : 1$. 
\begin{eqnarray*}
&&\Psi^{-1}(
\{
(v(0), v(1), v(2)) = (1,1,0) 
\})
=
\left[ 1-\frac {1}{\tau^3}, 1 \right),
\\
&&
\Psi^{-1}(
\{
(v(0), v(1), v(2)) = (1,0,1) 
\})
=
\left[ \frac {1}{\tau^2}, 1-\frac {1}{\tau^3} \right),
\\
&&\Psi^{-1}(
\{
(v(0), v(1), v(2), v(3)) = (0, 1,1,0) 
\})
=
\left[ \frac {1}{\tau^4}, \frac {1}{\tau^2} \right),
\\
&&
\Psi^{-1}(
\{
(v(0), v(1), v(2), v(3)) = (0, 1,0,1) 
\})
=
\left[ 0, \frac {1}{\tau^4} \right).
\end{eqnarray*}
Similarly, 
we consider 
$\Psi^{-1}(A_n)$
for 
$A_n = \{ v \in \Omega \; | \; 
v(0) = a_0, 
v(1) = a_1, \cdots, v(n) = a_n\}$
which corresponds to the two interval exchange dynamical system given by (\ref{sharp}).
As
$n$
becomes large, we have many intervals 
whose endpoints belong to
\[
D_- = 
\{x \; | \;  x \equiv n \alpha \pmod 1, 
\;
n = 0, -1, -2, \cdots  \}
\]
Since 
the induced system given by the first return map 
to each small interval is again 
the two interval exchange, 
each new interval is given by dividing 
each intervals into two ones with ratio 
$\tau : 1$, 
with the longer one has the previous dividing point as one of its endpoints. 

\vspace*{3em}
\begin{picture}(400, 100)
\put(30,0){
\put(0,100){\line(1,0){300}}
\put(0,50){\line(1,0){300}}
\put(0,0){\line(1,0){300}}
\put(180, 100){\line(0,1){15}}
\multiput(178, 95)(0, -5){7}{$\cdot$}
\multiput(178, 45)(0, -5){7}{$\cdot$}
\put(180, 50){\line(0,1){15}}
\put(180, 0){\line(0,1){15}}
\put(70, 50){\line(0,1){10}}
\multiput(69, 45)(0, -5){8}{$\cdot$}
\put(250, 50){\line(0,1){10}}
\multiput(249, 45)(0, -5){8}{$\cdot$}
\put(70, 0){\line(0,1){10}}
\put(250, 0){\line(0,1){10}}
\put(30, 0){\line(0,1){5}}
\put(135, 0){\line(0,1){5}}
\put(210, 0){\line(0,1){5}}
\put(280, 0){\line(0,1){5}}
\put(90, 105){$\tau^{-1}$(R)}
\put(240, 105){$\tau^{-2}$(L)}
\put(90, 90){$1$}
\put(240, 90){$0$}
\put(-20,90){($\theta=1$)}
\put(285,90){($\theta=0$)}
\put(20, 55){$\tau^{-3}$(L)}
\put(30, 40){$110$}
\put(110, 55){$\tau^{-2}$(R)}
\put(120, 40){$101$}
\put(200, 55){$\tau^{-3}$(R)}
\put(210, 40){0110}
\put(260, 55){$\tau^{-4}$(L)}
\put(270, 40){0101}
\put(8,5){$\tau^{-5}$}
\put(-5,-10){110110}
\put(40,5){$\tau^{-4}$}
\put(33,-20){110101}
\put(50, -10){\vector(0,1){10}}
\put(100,5){$\tau^{-3}$}
\put(90,-10){10110}
\put(150,5){$\tau^{-4}$}
\put(145,-10){10101}
\put(185,5){$\tau^{-5}$}
\put(175,-20){0110110}
\put(195, -10){\vector(0,1){10}}
\put(220,5){$\tau^{-4}$}
\put(210,-40){0110101}
\put(230, -30){\vector(0,1){30}}
\put(260,5){$\tau^{-5}$}
\put(240,-20){010110110}
\put(265, -10){\vector(0,1){10}}
\put(285,5){$\tau^{-6}$}
\put(270,-40){010110101}
\put(295, -30){\vector(0,1){30}}
}
\end{picture}

\vspace*{5em}

The operations 
$R$, $L$
correspond to those division of intervals in the following way \cite{F}. 
\begin{theorem}
\label{taiou}
The operation
$R$
(resp. $L$)
corresponds to creating the longer (resp. smaller)  interval.
\end{theorem}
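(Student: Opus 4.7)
I would proceed by induction on the step index $k$, proving simultaneously that the interval $I_k \subset {\bf T}$ corresponding to the first $k$ operations $(O_1,\dots,O_k)$ has length $\tau^{-m_k}$ for a positive integer $m_k$, and that the $(k+1)$-th operation subdivides $I_k$ into two pieces of lengths $\tau^{-m_k-1}$ and $\tau^{-m_k-2}$, with $O_{k+1}=R$ corresponding to the former (the longer) and $O_{k+1}=L$ to the latter. The identity $\tau^{-m_k}=\tau^{-m_k-1}+\tau^{-m_k-2}$, which follows from $\tau^2=\tau+1$, guarantees the lengths are consistent under repeated subdivision.

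The base case $k=1$ is exactly the content of (\ref{sharp}): $|\Psi^{-1}(\{v(0)=1\})|=1/\tau$ corresponds to $O_1=R$ (the longer piece) and $|\Psi^{-1}(\{v(0)=0\})|=1/\tau^2$ corresponds to $O_1=L$ (the shorter). For the inductive step, assume after $k$ steps that $v(0)$ is covered by $\underline{s_n}$ at some offset $j$ in the $(n-1,n)$-partition, so that $I_k$ is cut out by the constraints $(v(-j),\dots,v(q_n-1-j))=s_n$. The operation $O_{k+1}$ is then determined by whether $s_{n-1}$ or $s_n$ immediately follows $\underline{s_n}$, which translates via the shift-equivariance $v_{\theta-\alpha}(\cdot)=v_\theta(\cdot+1)$ into the membership $\theta+(q_n-j)\alpha \in \Psi^{-1}(\{v\text{ starts with }s_{n-1}\})$ or $\Psi^{-1}(\{v\text{ starts with }s_n\})$ respectively. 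The preimages of these starting-word cylinders are intervals whose lengths are known powers of $\tau$, and a direct computation shows $|I_{k+1}^R|=\tau^{-m_k-1}$ and $|I_{k+1}^L|=\tau^{-m_k-2}$, completing the induction.

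The main obstacle I expect is the bookkeeping of the offset $j$ and the exact powers of $\tau$ through the induction. A cleaner, more conceptual route that I would pursue in parallel exploits the self-similarity of the Sturmian two-interval exchange: since $\alpha=1/\tau$ is a fixed point of the Gauss map on continued fractions, the induced first-return map of $T_\alpha$ on each $I_k$ is conjugate, after rescaling by a power of $\tau$, to the original rotation $T_\alpha$ itself. Under this conjugacy the two subintervals produced by R and L correspond to $\Psi^{-1}(\{v(0)=1\})$ and $\Psi^{-1}(\{v(0)=0\})$ of the base case, so the ratio $\tau:1$ and the R/L assignment propagate automatically from the base case (\ref{sharp}) to every level, making the inductive step essentially trivial once the renormalization is set up.
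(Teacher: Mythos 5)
Your plan takes a metric route (tracking interval lengths through an induction, or via renormalization), whereas the paper argues combinatorially: it observes that the two possible extensions of the already-determined word are $\underline{s_n}\pi_{n+1}(10)$ and $\underline{s_n}\pi_{n+1}(01)$, that $L$ preserves the two-letter ending while $R$ flips it, and combines this with the geometric fact stated just before the theorem (the longer of the two new intervals is the one abutting the most recent dividing point). Either route can work, but as written yours has a genuine gap: in both of your versions, the identification of \emph{which} subinterval is the $R$-subinterval --- which is the entire content of the theorem --- is asserted rather than proved. ``A direct computation shows $|I_{k+1}^R|=\tau^{-m_k-1}$'' is the theorem restated, and the computation is not mere bookkeeping: your translation of the dichotomy into ``$\theta+(q_n-j)\alpha$ lies in the cylinder of $s_{n-1}$ or of $s_n$'' is not even a disjoint dichotomy, since $s_{n-1}$ is a prefix of $s_n=s_{n-1}s_{n-2}$; the two cases are distinguished only by the two letters following the common prefix $\pi_{n+1}$, which is precisely the observation on which the paper's proof rests.

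The renormalization route has the same gap in a subtler form. It is true that the first-return map to each level-$k$ interval rescales to the rotation by $\alpha$ again, but the claim that ``the R/L assignment propagates automatically'' is not correct as stated: at level $n\ge 1$ the operation $R$ corresponds to the next partition block being $s_{n-1}$ (the \emph{rarer} block, i.e.\ the letter $0$ under the derived-sequence recoding), whereas at the base level $R$ corresponds to $v(0)=1$; moreover the relevant datum shifts from ``the letter at position $0$'' to ``the block following the one containing $0$,'' and the rescaling conjugacy is not the identity (already at the second level the $R$-piece of $\{v(0)=1\}$ rescales to $[0,1/\tau)$, not to $[1/\tau^2,1)$ as in (\ref{sharp})). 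So one must still verify, at each renormalization step, which of the two return-time subintervals carries the label $R$; once that single verification is supplied (one computation of the type displayed in (\ref{sharp}) for the induced system), your argument closes and yields a legitimate alternative, more dynamical proof than the paper's word-ending argument.
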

\begin{proof}
Since 
the division of intervals corresponds to the words
$\underline{s_n}\pi_{n+1} (10)$
or 
$\underline{s_n}\pi_{n+1} (01)$, 
under the mapping 
$\Psi : {\bf T} \to \Omega$, 
it corresponds either to 
$R$
or
$L$. 
It then suffices to note that 
$L$
creates the word with the same ending of the original one, while 
$R$
creates the word with the opposite ending : 
$\cdots (01) \stackrel{L}{\to} \cdots (01)$, 
$\cdots (01) \stackrel{R}{\to} \cdots (10)$.
\QED
\end{proof}
\begin{remark}
If 
$\alpha \ne \frac {1}{\tau} (= \frac {\sqrt{5}-1}{2})$, 
we do not have such a simple relation except for quadratic numbers. 
In fact, 
we have many types 
$R_{(n, k)}$'s 
of embedding operations for general 
$\alpha$
and the induced system given by the first return map is not the two interval exchange in general. 
\end{remark}
%%%%%%
%
%
\begin{remark}
\label{theta}
For given 
$w = (O_1, O_2, \cdots) \in W$, 
we can compute the corresponding 
$\theta = (\Phi \circ \Psi)^{-1}(w)$
as follows. 
\begin{eqnarray*}
\theta &=& \sum_{n=0}^{\infty} d_n, 
\\
d_0 &=& 1, \; d_1 = - \frac {1}{\tau}, 
\;
d_{n+1}
=
(-1)^{a_n + 1}
\left( \frac {1}{\tau} \right)^{a_n + 1} 
\left( \frac {1}{\tau^2} \right)^{b_n}, 
\quad
n \ge 1
\end{eqnarray*}
where
$a_n := \sharp \{ 1 \le k \le n \; : \; O_k = R \}$, 
$b_n := \sharp \{ 1 \le k \le n \; : \; O_k = L \}$.
This is equivalent to represent 
$\theta \in {\bf T}$
in terms of the sum of 
$\{ \frac {1}{\tau^k} \}_{k \ge 1}$.
\end{remark}
%
%%%%%
\begin{remark}
For 
$w = 
(w_1, w_2, \cdots, w_{n-1}, w_n) \in {\cal A}^*$, 
let 
$w^{-1} :=
(w_n, w_{n-1}, \cdots, w_2, w_1)$
be its mirror image.
Then 
$t_n := s_n^{-1}$ 
satisfies 
$
t_n = \cases{
(01) \pi_n & $(n : even)$ \cr
(10) \pi_n & $(n : odd)$ \cr
}
$
and 
$t_{n+1} = t_{n-1} t_{n}$.
Since 
$v \in \Omega \Longleftrightarrow v^{-1} \in \Omega$, 
$v \in \Omega$
always has 
$(n-1, n)$-partition by 
$t_n$ 
so that we can define embedding
$R'$, $L'$
by using 
$t_n$'s 
in the same way as $R$, $L$
(we set 
$v(-1)$
as the starting point). 
We have analogue of Theorem \ref{taiou}, and 
for 
$n$ 
even both 
$(01) \pi_{n+1} t_n (10)$
and 
$(01) \pi_{n+1} t_n (01)$
are possible while only 
$(10) \pi_{n+1} t_n (10)$
is allowed.
\end{remark}
%

%%%%%
\subsection{a measure induced by the random embedding}
Let 
$m$
be a measure on
$\{ R, L \}$
with 
$m(\{ R \}) = p \in (0,1)$, 
$m(\{L \}) = q :=1 - p$
and let 
${\bf P} := \otimes_{{\bf N}} m$.
In this section we study the measure 
$\mu$
on 
${\bf T}$
induced by the mapping 
$\Phi \circ \Psi : {\bf T} \to W$.
This may be 
regarded as an analogue of the Bernoulli convolution problem 
\cite{PSS}.  
Since
${\bf P}(\{ O_j = R \mbox{ for large  }j \}) =
{\bf P}(\{ O_j = L \mbox{ for large  }j \})= 0$, 
$\mu$
is a probability measure. 
It is easy to see that 
$\mu$
does not have atoms.
\begin{theorem}
(1)
If 
$p = \frac {1}{\tau}$, $q = \frac {1}{\tau^2}$, 
$\mu$
is equal to the Lebesgue measure.\\
(2)
If 
$\frac {1}{\tau^2} < p < \frac 12$, 
$\mu$
has singular continuous component. 
\end{theorem}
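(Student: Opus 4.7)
The plan is to transfer the Bernoulli product structure on $W$ to a self-similar structure on intervals in ${\bf T}$ via Theorem \ref{taiou}, and then analyze the Radon--Nikodym derivative pointwise by applying the strong law of large numbers.

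First I would set up the dictionary. For a cylinder $C_w = \{O_1 = w_1, \ldots, O_n = w_n\} \subset W$ containing $a_n$ letters $R$ and $b_n = n - a_n$ letters $L$, the Bernoulli measure gives ${\bf P}(C_w) = p^{a_n} q^{b_n}$. Iterating Theorem \ref{taiou}, the preimage $I_n := (\Phi \circ \Psi)^{-1}(C_w) \subset {\bf T}$ is a single interval whose Lebesgue length is $\tau^{-a_n} \tau^{-2 b_n}$, since each $R$ selects the longer sub-interval (of relative length $1/\tau$) and each $L$ selects the shorter (of relative length $1/\tau^2$). Consequently $\mu(I_n) = p^{a_n} q^{b_n}$ while $|I_n| = \tau^{-a_n} \tau^{-2 b_n}$.

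For part (1), plugging in $p = 1/\tau$ and $q = 1/\tau^2$ yields $\mu(I_n) = |I_n|$ for every such cylinder. The collection of these cylinder intervals is a $\pi$-system whose elements have diameters shrinking to zero, so it generates the Borel $\sigma$-algebra on ${\bf T}$, and Dynkin's lemma identifies $\mu$ with the Lebesgue measure.

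For part (2), I would apply the strong law of large numbers to the i.i.d.\ indicators $1_{\{O_k = R\}}$: for ${\bf P}$-a.e.\ $w$, $a_n/n \to p$ and $b_n/n \to q$. Therefore
\[
\frac{1}{n} \log \bigl( \mu(I_n)/|I_n| \bigr) = \frac{a_n}{n} \log(p\tau) + \frac{b_n}{n} \log(q\tau^2) \longrightarrow f(p),
\]
where $f(p) := p \log(p\tau) + (1-p)\log((1-p)\tau^2)$. A direct computation gives $f'(p) = \log\bigl(p/((1-p)\tau)\bigr)$, which vanishes only at $p = 1/\tau$, combined with $f''(p) = 1/p + 1/(1-p) > 0$ and $f(1/\tau) = 0$; hence $f(p) > 0$ for every $p \in (0,1) \setminus \{1/\tau\}$, and in particular on $(1/\tau^2, 1/2)$. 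So $\mu(I_n)/|I_n| \to \infty$ at ${\bf P}$-a.e.\ (equivalently $\mu$-a.e.) $\theta$. By the Lebesgue differentiation theorem, $\mu$ has no absolutely continuous part with respect to Lebesgue measure; combined with the already-noted absence of atoms, a singular continuous component must be present.

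The main technical hurdle is ensuring that the nested cylinders $\{ I_n(\theta) \}$ form a legitimate shrinking neighborhood basis of $\theta$ suitable for Lebesgue differentiation. This requires ruling out the exceptional set $\Omega_R$ on which $\Phi$ is not injective, but this set is ${\bf P}$-negligible (it corresponds to sequences eventually constant, as already noted by the author), and $|I_n| \to 0$ at ${\bf P}$-a.e.\ $w$ follows from the length formula and the LLN. I observe in passing that this argument actually establishes that $\mu$ is fully singular continuous for every $p \neq 1/\tau$, so the range $(1/\tau^2, 1/2)$ stated in the theorem appears not to be sharp.
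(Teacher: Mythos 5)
Your proof is correct, and for part (2) it takes a genuinely different route from the paper. For part (1) the two arguments coincide: the paper simply says that (1) follows from Theorem \ref{taiou}, and your cylinder computation $\mu(I_n)=p^{a_n}q^{b_n}=\tau^{-a_n}\tau^{-2b_n}=|I_n|$ plus the generating $\pi$-system is exactly the intended fleshing-out. For part (2) the paper never invokes a law of large numbers: it fixes the set $A$ of $\theta$ whose coding satisfies $l_n>\frac{n}{\alpha}$ infinitely often (so that the density ratio $r_n=(1/x)^{\alpha l_n-n}$ exceeds $1$ there), and then proves $\mu(A)>0$ by contradiction via a combinatorial $R\leftrightarrow L$ exchange argument (Lemma \ref{one}); it is precisely this exchange step that forces the hypotheses $p>\frac{1}{\tau^2}$ (needed for the inclusion $(B\setminus A)'_N\subset A\setminus B$) and $p<\frac 12$ (needed so that $\mu(I_n)$ is monotone in $l_n$), and that is why the paper only concludes the \emph{existence} of a singular continuous component on that restricted range. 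You instead apply the strong law of large numbers to the i.i.d.\ coordinates, getting $a_n/n\to p$ for ${\bf P}$-a.e.\ $w$ and hence $\frac 1n\log\bigl(\mu(I_n)/|I_n|\bigr)\to f(p)>0$ for all $p\ne\frac 1\tau$ by the strict convexity of $f$; passing from the non-centered nested intervals $I_n(\theta)$ to centered balls costs only a factor $2$, so $D_\mu(\theta)=\infty$ on a set of full $\mu$-measure and the criterion the paper quotes from Rogers (or the Lebesgue differentiation theorem) kills the absolutely continuous part entirely. Your approach is both simpler and strictly stronger: it shows $\mu$ is purely singular continuous for every $p\in(0,1)\setminus\{\frac 1\tau\}$, confirming your closing observation that the range $(\frac{1}{\tau^2},\frac 12)$ and the phrase ``has singular continuous component'' in the statement are artifacts of the paper's method rather than genuine limitations. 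The only points worth making fully explicit in a final write-up are the ones you already flag: that each $I_n$ is a single interval of length $\tau^{-a_n-2b_n}$ (which is the paper's own formula, justified by the interval-subdivision picture of subsection 2.3), and that the exceptional set where $\Phi$ fails to be injective is ${\bf P}$-null.
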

\begin{proof}
(1)
follows from 
Theorem \ref{taiou}. 
To prove 
(2), 
we use the following fact \cite{Rogers} : 
set 
\[
D_{\mu}(x)
:=
\limsup_{ \delta \downarrow 0}
\frac {\mu(x - \delta, x + \delta)}{\delta}, 
\quad
A :=
\{ x | D_{\mu}(x) = \infty \}.
\]
Then 
$1_{A} d \mu$
is singular w.r.t. the Lebesgue measure. 
Take any 
$(O'_1, O'_2, \cdots, O'_n) \in \{ R, L \}^n$
and let 
$k_n = \sharp \{ 1\le j \le n \; | \; O'_j = R\}$, 
$l_n = \sharp \{ 1\le j \le n \; | \; O'_j = L \}$, 
$k_n + l_n = n$.
Then
$I_n 
= I_n (O'_1, O'_2, \cdots, O'_n) 
:= 
(\Phi \circ \Psi)^{-1} (
\{ w = (O_1, O_2, \cdots ) \in W \; | \; 
O_1 = O'_1, O_2 = O'_2, \cdots, O_n = O'_n \})$
satisfies
\[
| I_n | =
\left(
\frac {1}{\tau}
\right)^{k_n}
\left(
\frac {1}{\tau^2}
\right)^{l_n}
=
\frac {1}{\tau^{k_n + 2l_n}}, 
\quad
\mu(I_n) = p^{k_n} q^{l_n}
\]
so that we have
\begin{eqnarray*}
r_n :=
\frac {\mu (I_n) }{| I_n |}
&=&
\frac {p^{k_n} q^{l_n}}
{
\left( \frac {1}{\tau} \right)^{ k_n+2l_n }
}
=
( p \tau )^n 
\left(
\frac {(1-p) \tau}{p}
\right)^{l_n}.
\end{eqnarray*}
Define 
$x$
and 
$\alpha$
by 
\[
p \tau  =: x < 1, 
\quad
\frac {\tau}{x}(\tau - x)
=
x^{- \alpha}, 
\]
Then we have 
$\alpha > 1$
and 
\begin{eqnarray}
r_n
=
x^n 
x^{- \alpha l_n }
=
\left(
\frac 1x
\right)^{\alpha l_n - n}
\label{ratio}
\end{eqnarray}
For 
$w = (O_1, O_2, \cdots ) \in W$
let 
$k_n (w)= \sharp \{ 1\le j \le n \; | \; O_j = R\}$, 
$l_n (w)= \sharp \{ 1\le j \le n \; | \; O_j = L\}$, 
$k_n(w) + l_n(w) = n$.
By 
(\ref{ratio})
$\mu |_A$
is singular continuous, where
\[
A := (\Phi \circ \Psi)^{-1}( 
\{ w \in W \; | \; 
l_n(w) > \frac {n}{\alpha}
\mbox{ for infinitely many
$n$
 } \}).
\]
Lemma \ref{one} below shows 
$\mu (A)> 0$.
\QED
\end{proof}
Let 
\[
B = (\Phi \circ \Psi)^{-1}(
\{ w \in W \; | \;
l_n (w) \le \frac {n}{\alpha}
\mbox{ for infinitely many
$n$
 } \})
\]
so that 
${\bf T} =A \cup B = A \cup (B \setminus A)$.
\begin{lemma}
\label{one}
If 
$\frac {1}{\tau^2} < p < \frac 12$, 
$\mu (A) >0$. 
\end{lemma}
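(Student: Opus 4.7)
The plan is to reduce Lemma \ref{one} to the deterministic inequality $q > 1/\alpha$ (with $q := 1-p$) and then close the argument via the strong law of large numbers. Under the product measure ${\bf P}$, the coordinates $O_1, O_2, \ldots$ are i.i.d.\ Bernoulli with ${\bf P}(O_j = L) = q$, so $l_n(w)/n \to q$ for ${\bf P}$-a.e.\ $w \in W$. Once $q > 1/\alpha$ has been established, this implies $l_n(w) > n/\alpha$ for all sufficiently large $n$ almost surely, and therefore
$$\mu(A) = {\bf P}(\{w \in W : l_n(w) > n/\alpha \mbox{ for infinitely many } n \}) = 1 > 0.$$

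For the deterministic step I would first put $\alpha$ in tractable closed form. Substituting $x = p\tau$ and $\tau - x = q\tau$ into the defining relation $\tau(\tau - x)/x = x^{-\alpha}$ yields $(p\tau)^{-\alpha} = q\tau/p$, hence
$$\alpha = \frac{\log(q\tau/p)}{\log(1/(p\tau))}.$$
In the range $1/\tau^2 < p < 1/2$ both numerator and denominator are strictly positive (since $p < 1/2 < 1/\tau$ gives $p\tau < 1$, while $q > p$ and $\tau > 1$ give $q\tau > p$), so $\alpha$ is a well-defined positive number. A short algebraic rearrangement using $p + q = 1$ then reduces $\alpha q > 1$ to the entropy-type inequality
$$H(p) < (2 - p)\log\tau, \qquad H(p) := -p\log p - q\log q.$$

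To verify this on our interval, set $g(p) := (2-p)\log\tau - H(p)$. A direct differentiation yields $g'(p) = \log(p/(q\tau))$, whose unique zero on $(0,1)$ is $p = 1/\tau > 1/2$. Therefore $g$ is strictly decreasing on $(1/\tau^2, 1/2)$, and its infimum over this interval is attained (as a limit) at $p = 1/2$, where $g(1/2) = \frac{3}{2}\log\tau - \log 2$. This quantity is strictly positive iff $\tau^3 > 4$, and indeed $\tau^3 = 2\tau + 1 = 2 + \sqrt{5} > 4$. Hence $g > 0$ throughout $(1/\tau^2, 1/2)$, which gives $\alpha q > 1$, and the strong law step concludes.

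The main obstacle is the entropy inequality $H(p) < (2-p)\log\tau$ in the third paragraph; once it is in hand, the probabilistic conclusion is immediate from the strong law of large numbers applied to the i.i.d.\ Bernoulli sequence $(\mathbf{1}_{O_j = L})_{j \ge 1}$.
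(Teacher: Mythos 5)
Your proof is correct, and it takes a genuinely different route from the paper's. The paper argues by contradiction: assuming $\mu(A)=0$, it finds a positive-measure set $(B\setminus A)_N$ on which $k_n\ge(1-\frac{1}{\alpha})n$ for all $n\ge N$, exchanges $R$ with $L$ to land inside $A\setminus B$ (a null set under the assumption), and derives the contradiction from the monotonicity of $\mu(I_n)=(x/\tau)^n(\tau x^{\alpha})^{-l_n}$ in $l_n$ when $p<\frac12$; it never needs to decide whether $q>1/\alpha$. You instead prove the sharper deterministic fact $\alpha q>1$: your closed form $\alpha=\log(q\tau/p)/\log(1/(p\tau))$ is right (using $\tau-x=q\tau$), your reduction of $\alpha q>1$ to $H(p)<(2-p)\log\tau$ is a correct rearrangement via $1+q=2-p$, and the calculus step checks out ($g'(p)=\log(p/(q\tau))$ vanishes only at $p=1/\tau>\frac12$ because $\tau^2=\tau+1$, so $g$ decreases on $(1/\tau^2,1/2)$, and $g(1/2)=\frac32\log\tau-\log 2>0$ since $\tau^3=2+\sqrt5>4$ --- note the margin is thin, about $0.029$). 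The strong law of large numbers then gives $l_n(w)/n\to q>1/\alpha$ for ${\bf P}$-a.e.\ $w$, hence $\mu(A)=1$, which is stronger than the stated $\mu(A)>0$; the identification $\mu(A)={\bf P}(\{l_n>n/\alpha\mbox{ i.o.}\})$ is exactly the paper's own convention for the induced measure. Your approach buys a transparent, quantitative argument and the stronger conclusion $\mu(A)=1$; the paper's exchange argument has the advantage of not relying on the explicit value of $\alpha$, at the cost of a more delicate symmetrization step.
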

\begin{proof}
Suppose 
$\mu (A) =0$,  
then 
$\mu (B \setminus A) > 0$. 
By definition, 
\begin{eqnarray*}
A \setminus B
&=&
(\Phi \circ \Psi)^{-1}(
\{ w \in W \; | \; n \gg 1, \; l_n(w) > \frac {n}{\alpha} \}
)
\\
B \setminus A
&=&
\bigcup_{N \ge 1}
\bigcap_{n \ge N}
(\Phi \circ \Psi)^{-1}(
\{ x \; | \; \; k_n (w) \ge (1 - \frac {1}{\alpha}) n \}
)
=:
\bigcup_{N \ge 1} (B \setminus A)_N.
\end{eqnarray*}
Since
$(B \setminus A)_N$
is monotone increasing, 
$\mu ((B \setminus A)_N) > 0$
for some 
$N$. 
Let 
$(B \setminus A)'_N$
be the set with 
$R$
and 
$L$
being exchanged in 
$(B \setminus A)_N$ : 
\[
(B \setminus A)'_N
= 
\bigcap_{n \ge N}
(\Phi \circ \Psi)^{-1}(
\{ w \in W \; | \; 
l_n(w) \ge (1 - \frac {1}{\alpha}) n \}
).
\]
Since 
$\frac {1}{\tau^2} < p < \frac {1}{\tau}$, 
$l_n (w) \ge (1 - \frac {1}{\alpha}) n$
implies 
$l_n (w) > \frac {n}{\alpha}$
so that 
$(B \setminus A)'_N \subset A \setminus B$.
Hence
\[
\mu ((B \setminus A)'_N) 
\le
\mu (A \setminus B) (=0).
\]
It suffices to show
\begin{equation}
(0 <)
\;
\mu ((B \setminus A)_N)
\le
\mu ((B \setminus A)'_N)
\label{flat}
\end{equation}
which leads us to a  contradiction. 
To see (\ref{flat}), note that we may assume 
\[
l_{N-1}((\Phi \circ \Psi)(x)) \le \frac {N-1}{2}
\]
for 
$x \in (B \setminus A)_N$ 
by letting
$N$
large if necessary. 
Hence
if we exchange 
$R$
with 
$L$, 
$\sharp L$
increases in 
$( B\setminus A)_N$. 
Since 
$
\mu (I_n)
=
\left(
\frac {x}{\tau}
\right)^n 
(\tau x^{\alpha})^{-l_n}
$
and since 
$\tau x^{\alpha} < 1$
for 
$p < \frac 12$, 
$\mu(I_n)$
is monotone increasing w.r.t. 
$l_n$
which implies
$(\ref{flat})$.
\QED
\end{proof}
%
%%%%%
\section{Some combinatorial aspects of admissible words}
In this section, 
we consider general circle map sequences except in subsection 3.2, and 
use the symbol 
$A$, $B$
instead of 
$1$, $0$
respectively. 
Let 
$P_n$
be the set of admissible words(factors of 
$v_0$) of length 
$n$. 
$| P_n | = n+1$
is well known. 
We can find 
$t_n \in P_n$
uniquely such that 
$t_n A$, $t_n B \in P_{n+1}$
and for 
$a \in P_n \setminus \{ t_n \}$
there exists unique
$C = C(a) \in \{ A, B\}$
with 
$a C(a) \in P_{n+1}$
\footnote{This is called the right special factor \cite{L}}. 
For any 
$k$
with 
$n \le q_k -2$
we have 
$t_n
=
(\pi_k(n), \pi_k(n-1), \cdots, \pi_k(1))$. 
In this section
we study some combinatorial properties of admissible words. 
%%%%%
\subsection{Exhausting point}
For 
$n \ge 2$, 
let
$f(n) \in {\bf N}$
be the smallest number 
where we have seen all words in 
$P_n$
in 
$( v_0 (n) )_{ n \ge 1}$.
For instance in the Fibonacci word, 
\[
ABA
\underline{A}_2
BA
\underline{B}_3 \underline{A}_4
ABA
\underline{A}_5 \underline{B}_6
AB \cdots
\]
$\underline{ * }_n$
corresponds to 
$f(n)$. 
Hence 
$f(2) = 4$, $f(3) = 7$, $f(4)=8$
in this case.
\begin{theorem}
\label{f}
\quad\\
Let 
$n \ge 2$
and take
$k = 0, 1, \cdots$
such that
\[
q_k \le n \le q_{k+1}-1.
\]
Then writing 
$n = q_k + j$, 
we have
\begin{equation}
f(q_k + j)
=
q_{k+1} + q_k - 1 + j, 
\quad
j = 0, 1, \cdots, q_{k+1} - q_k -1.
\label{exhausting point}
\end{equation}
\end{theorem}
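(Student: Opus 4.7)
The plan is to recast the problem geometrically via the map $\Psi : {\bf T} \to \Omega$, $\theta \mapsto v_\theta$, and then apply the three-distance theorem. A factor $v_0(i+1) \cdots v_0(i+n)$ of length $n$ is determined by which arc of the partition ${\cal P}_n$ of ${\bf T}$ --- generated by the $n+1$ discontinuities $\{-j\alpha \bmod 1\}_{j=1}^{n+1}$ of the cylinder map $\theta \mapsto v_\theta(1) \cdots v_\theta(n)$ --- contains $i\alpha \bmod 1$; distinct arcs correspond to distinct admissible factors, and all $n+1$ are realized. Consequently $f(n) = n + M(n)$, where $M(n)$ is the smallest integer such that $\{i\alpha\}_{i=0}^{M(n)}$ meets every arc of ${\cal P}_n$. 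The goal becomes: $M(n) = q_{k+1} - 1$ whenever $q_k \le n < q_{k+1}$.

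Setting $\eta_\ell := |q_\ell \alpha - p_\ell|$ (which satisfies $\eta_{\ell-1} = a_{\ell+1}\eta_\ell + \eta_{\ell+1}$), the three-distance theorem applied to the $n+1$ boundary points of ${\cal P}_n$ --- with $n+1 = q_k + (j+1)$, $0 \le j \le q_{k+1} - q_k - 1$ --- yields exactly $j+1$ shortest arcs of length $\eta_k$; applied to the $q_{k+1}$-point orbit it yields gap lengths bounded by $\eta_{k-1}$. I then establish the explicit description of these $j+1$ short arcs: each has the form $I_\ell := [-\ell\alpha, -(\ell + q_k)\alpha]$ (endpoints read along the shorter arc on ${\bf T}$) for $\ell = 1, \ldots, j+1$, and the orbit point $(q_{k+1} - \ell)\alpha$ lies in the interior of $I_\ell$ --- this follows from a direct calculation using $q_{k+1}\alpha = p_{k+1} + (-1)^{k+1}\eta_{k+1}$ and the analogous identity for $(q_{k+1} + q_k)\alpha$.

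For the lower bound $f(n) \ge q_{k+1} + n - 1$, I apply the description above with $\ell = 1$: the short arc $I_1$ contains $(q_{k+1} - 1)\alpha$, i.e., $I_1$ represents the factor starting at position $q_{k+1}$. The best-approximation property of the convergent $p_{k+1}/q_{k+1}$ --- $\|j\alpha\| \ge \eta_k$ for $0 < j < q_{k+1}$ with equality only at $j = q_k$, and the orbit point $(q_k - 1)\alpha$ achieving equality sitting at distance $\eta_k$ from $-\alpha$ on the opposite side from $I_1$ --- forces every orbit point $i\alpha$ with $0 \le i \le q_{k+1} - 2$ to lie outside $I_1$; so $I_1$ is missed. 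For the upper bound, the long and medium arcs of ${\cal P}_n$ (lengths exceeding $\eta_k$) are hit by orbit points via a pigeonhole argument using the max orbit gap $\eta_{k-1}$ together with the involution $\theta \mapsto -\theta$ relating the orbit to the boundary points, while the $j+1$ short arcs are covered by the bijection $I_\ell \leftrightarrow (q_{k+1}-\ell)\alpha$ established above.

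The main obstacle is the explicit verification of this bijection --- checking that no boundary point $-m\alpha$ with $1 \le m \le n+1$, $m \notin \{\ell, \ell + q_k\}$, lies on the short arc between $-\ell\alpha$ and $-(\ell + q_k)\alpha$, and that $(q_{k+1} - \ell)\alpha$ actually lies in that arc. For the Fibonacci case this reduces cleanly to the identity $\eta_{\ell-1} = \eta_\ell + \eta_{\ell+1}$, but for general continued fractions the regime $a_\ell > 1$ introduces the multi-subdivision structure seen in the $R_{(n,k)}$-operations introduced in Section 2, and requires a small case analysis. No new idea is needed beyond patient tracking of the three-distance subdivision process.
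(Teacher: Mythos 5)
Your reduction $f(n) = n + M(n)$, where $M(n)$ is the first time the orbit $\{i\alpha\}_{i=0}^{M}$ meets every arc of the partition ${\cal P}_n$ of ${\bf T}$ cut by $\{-m\alpha \bmod 1\}_{m=1}^{n+1}$, is correct, and your lower bound is essentially complete: the arc of ${\cal P}_n$ containing $(q_{k+1}-1)\alpha$ lies inside the arc of length $\eta_k:=|q_k\alpha-p_k|$ between $-\alpha$ and $-(q_k+1)\alpha$, and the best-approximation property (together with the sign check at $m=q_k$ that you correctly flag) keeps every $i\alpha$ with $i\le q_{k+1}-2$ out of it. The genuine gap is in the upper bound. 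The orbit $\{i\alpha\}_{i=0}^{q_{k+1}-1}$ has $q_{k+1}$ points whose gaps are $\eta_k$ and $\eta_k+\eta_{k+1}$, so its maximal gap is $\eta_k+\eta_{k+1}$; but the ``medium'' arcs of ${\cal P}_n$ can have length exactly $\eta_k+\eta_{k+1}$ (already for Fibonacci with $n=q_k$ the medium arcs have length $\eta_{k-1}=\eta_k+\eta_{k+1}$). Hence the pigeonhole ``length exceeds the maximal orbit gap, so the arc is hit'' does not close: a medium arc could a priori sit exactly inside one long orbit gap and contain no orbit point. Ruling this out requires an actual interlacing argument --- the boundary set is an isometric copy of the orbit under $\theta\mapsto -\theta-\alpha$, and one must show each arc of one configuration contains a point of the other --- and the phrase ``together with the involution'' gestures at this without supplying it. This is the heart of the upper bound, not routine bookkeeping. (A simplification you miss: every arc of ${\cal P}_n$ is a union of arcs of ${\cal P}_{q_{k+1}-1}$, so the upper bound for all $n\in[q_k,q_{k+1}-1]$ reduces to the single case $n=q_{k+1}-1$, where the two $q_{k+1}$-point configurations are isometric.)

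For comparison, the paper's proof is purely combinatorial and runs in the opposite logical direction: Lemma \ref{g} characterizes $f(n)$ as the smallest integer $\ge f(n-1)+1$ by which both extensions $t_{n-1}A$ and $t_{n-1}B$ of the right special factor have occurred, and then an induction on $k$ computes $g(n)=q_{k+1}+q_k-1$ throughout $q_k\le n\le q_{k+1}-1$, using the identity $s_{k+1}\pi_k=s_k\pi_{k+1}$ and a counting argument showing $t_{q_k-1}$ occurs exactly once in $s_k\pi_k$. The paper then \emph{derives} the three-distance theorem from the resulting classification (Theorem \ref{three-distance}), whereas your argument consumes the three-distance theorem as an input; that is logically admissible, but it inverts the paper's intent, and in any case the medium-arc step must be completed before your proof stands.
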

The corresponding exhausting points lies from the letter  next to 
$s_{k+1} \pi_k$
to the last letter in 
$s_{k+1} \pi_{k+1}$.
Therefore 
$(v_0(f(n)))_{n \ge 2}$
coincides with the original circle map sequence 
$(v_0 (n) )_{n \ge 1}$. 
\begin{corollary}
\[
v_0 (f(n+1)) = v_0 (n), 
\quad
n =1,  2, 3, \cdots
\]
\end{corollary}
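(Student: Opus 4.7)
The plan is to use Theorem~\ref{f} to rewrite $f(n+1)$ explicitly, reducing the corollary to a near-periodicity identity for $v_0$, and then verify that identity by unfolding the recursion $s_{n+1} = s_n^{a_{n+1}} s_{n-1}$. For $n \geq 1$, let $k$ be the unique index with $q_k \leq n+1 \leq q_{k+1}-1$ and put $j = n+1-q_k$. Theorem~\ref{f} then gives
$$ f(n+1) = q_{k+1} + q_k - 1 + j = q_{k+1}+n, $$
so the corollary is equivalent to the near-periodicity identity
$$ v_0(n+q_{k+1}) = v_0(n), \qquad q_k-1 \leq n \leq q_{k+1}-2. \qquad (\ast) $$

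To prove $(\ast)$, I first identify enough of the prefix of $v_0$. Since $v_0$ begins with $s_{k+2}$ and since $s_{k+1}$ itself starts with $s_k$ (from $s_{k+1} = s_k^{a_{k+1}} s_{k-1}$), the first $q_{k+1}+q_k$ letters of $v_0$ form $s_{k+1}\, s_k$, whether $a_{k+2}=1$ or $a_{k+2}\geq 2$. A similar case split using $s_{k+3} = s_{k+2}^{a_{k+3}} s_{k+1}$ shows that positions $q_{k+2}+1,\ldots,q_{k+2}+q_{k+1}$ of $v_0$ always form another copy of $s_{k+1}$: if $a_{k+3}\geq 2$ the next $s_{k+2}=s_{k+1}^{a_{k+2}} s_k$ begins with $s_{k+1}$; if $a_{k+3}=1$ those positions are $s_{k+1}$ directly from the decomposition of $s_{k+3}$.

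Splitting the range in $(\ast)$ at $n=q_k$: for $n\in\{q_k-1,q_k\}$, position $q_{k+1}+n$ sits in the trailing $s_k$ of $s_{k+1}\, s_k$, so $v_0(q_{k+1}+n)=s_k(n)$ and $v_0(n) = s_{k+1}(n) = s_k(n)$. For $q_k+1 \leq n \leq q_{k+1}-2$, one has $q_{k+1}+n = q_{k+2}+(n-q_k)$ with $1 \leq n-q_k \leq q_{k+1}-q_k-2$, so $v_0(q_{k+1}+n) = s_{k+1}(n-q_k)$, and $(\ast)$ reduces to the self-similarity
$$ s_{k+1}(m+q_k) = s_{k+1}(m), \qquad 1 \leq m \leq q_{k+1}-q_k-2, $$
which follows from $s_{k+1} = s_k^{a_{k+1}} s_{k-1}$ (when $m+q_k \leq a_{k+1} q_k$ both indices lie in the same copy of $s_k$, and otherwise $m+q_k$ falls in the $s_{k-1}$ tail and the equality uses that $s_k$ starts with $s_{k-1}$). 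The main obstacle is the bookkeeping in this last sub-case, where one must chase the recursion one step further to identify the letters at position $q_{k+1}+n$ and then verify a self-similarity of $s_{k+1}$ that uses both halves of its decomposition. No new idea beyond Theorem~\ref{f} and the recursive structure of the $s_n$'s is required.
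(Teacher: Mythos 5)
Your reduction of the corollary to the near-periodicity statement $v_0(q_{k+1}+n)=v_0(n)$ for $q_k-1\le n\le q_{k+1}-2$ is exactly right, and it is the same reduction the paper makes implicitly (the paper simply observes that the exhausting points sweep out the suffix $\pi_{k+1}$ of the prefix $s_{k+1}\pi_{k+1}$ of $v_0$, which has length $2q_{k+1}-2$, so that $v_0(f(n+1))=\pi_{k+1}(n)=v_0(n)$). However, your verification of $(\ast)$ contains a step that fails for general $\alpha$: the identity $q_{k+1}+n=q_{k+2}+(n-q_k)$ is equivalent to $q_{k+2}=q_{k+1}+q_k$, i.e.\ to $a_{k+2}=1$, since $q_{k+2}=a_{k+2}q_{k+1}+q_k$. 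When $a_{k+2}\ge 2$ the position $q_{k+1}+n$ does not lie in the copy of $s_{k+1}$ occupying positions $q_{k+2}+1,\dots,q_{k+2}+q_{k+1}$ (indeed $q_{k+1}+n\le 2q_{k+1}-2<q_{k+2}+1$), so the equation $v_0(q_{k+1}+n)=s_{k+1}(n-q_k)$ is unjustified there. The corollary is stated in Section 3.1, which covers general circle map sequences, so this case cannot be ignored.

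The gap is localized and easily repaired: when $a_{k+2}\ge 2$ the word $v_0$ begins with $s_{k+1}s_{k+1}$, so $v_0(q_{k+1}+n)=s_{k+1}(n)=v_0(n)$ for all $1\le n\le q_{k+1}$ with no further work; your detour through $q_{k+2}$ and the self-similarity $s_{k+1}(m+q_k)=s_{k+1}(m)$ is needed only in the subcase $a_{k+2}=1$, where your position arithmetic and the subsequent argument are correct. So the right case split is on the value of $a_{k+2}$, not (only) on the value of $a_{k+3}$. With that correction your argument goes through; it is somewhat longer than the paper's one-line derivation from the identity ``prefix of length $2q_{k+1}-2$ equals $s_{k+1}\pi_{k+1}$,'' but it proves the same near-periodicity by direct unfolding of the recursion rather than by invoking the palindrome structure.
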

Let 
$g(n) \in {\bf N}$
be the smallest number where we have seen both 
$t_{n-1} A$
and 
$t_{n-1} B$. 
As a preparation, 
we prove the following lemma.
\begin{lemma}
\label{g}
$f(n)$
is the smallest number satisfying following conditions. 
\[
(i)
\;
f(n-1) + 1 \le f(n),
\quad
(ii)
\;
g(n) \le f(n)
\]
\end{lemma}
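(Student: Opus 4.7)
The plan is to verify that $f(n)$ itself satisfies conditions (i) and (ii), and then to establish minimality by exploiting the fact that $t_{n-1}$ is the unique right special factor of length $n-1$, so that $|P_n|=n+1$ splits cleanly into ``special'' and ``deterministic'' extensions.

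For the direct inclusion, I first note that every $a \in P_{n-1}$ is the length-$(n-1)$ prefix of at least one word in $P_n$: if $a \ne t_{n-1}$ then $aC(a) \in P_n$, and if $a = t_{n-1}$ then both $aA, aB \in P_n$. Hence, if every word of $P_n$ is visible in $v_0(1)\cdots v_0(f(n))$, then every length-$(n-1)$ word is visible already in $v_0(1)\cdots v_0(f(n)-1)$, giving $f(n-1) \le f(n)-1$ and establishing (i). Condition (ii) is immediate, since $t_{n-1}A$ and $t_{n-1}B$ both belong to $P_n$ and hence appear by position $f(n)$.

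For the minimality direction, let $m$ satisfy $m \ge f(n-1)+1$ and $m \ge g(n)$; I will show every word of $P_n$ is already present in $v_0(1)\cdots v_0(m)$, which forces $f(n) \le m$. Since $|P_n| = |P_{n-1}|+1$, we have the decomposition
\[
P_n \;=\; \{t_{n-1}A,\; t_{n-1}B\} \;\cup\; \{\, a\,C(a) \,:\, a \in P_{n-1}\setminus\{t_{n-1}\}\,\}.
\]
The two special extensions appear by position $g(n) \le m$ by definition of $g$. For any non-special $a \in P_{n-1}\setminus\{t_{n-1}\}$, condition (i) gives $f(n-1) \le m-1$, so $a$ already occurs as $v_0(p-n+2)\cdots v_0(p)$ for some $p \le m-1$; then $v_0(p-n+2)\cdots v_0(p+1)$ is a length-$n$ admissible word with prefix $a$, and by uniqueness of the right extension of a non-special factor it must equal $aC(a)$. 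Thus $aC(a)$ appears by position $p+1 \le m$, completing the proof.

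The argument is essentially an unpacking of definitions; the only delicate point is index bookkeeping, together with the conceptual observation that conditions (i) and (ii) are calibrated to the dichotomy in $P_{n-1}$: condition (i) forces all deterministic extensions to be revealed automatically one step later, while condition (ii) is needed only to resolve the unique genuine bifurcation at $t_{n-1}$. I do not anticipate any real obstacle beyond making sure that ``appears by position $N$'' is interpreted consistently as ``ends at some position $\le N$'' throughout.
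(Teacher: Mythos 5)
Your proof is correct and follows essentially the same route as the paper's: verify (i) via the prefix map $P_n \to P_{n-1}$ and (ii) trivially, then obtain minimality from the decomposition $P_n = \{t_{n-1}A, t_{n-1}B\} \cup \{aC(a)\}_{a \in P_{n-1}\setminus\{t_{n-1}\}}$. You merely spell out the index bookkeeping that the paper dismisses as ``clear.''
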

\begin{proof}
We first show that 
$f(n)$
satisfies (i), (ii). 
(ii)
is clear.
We should have 
$f(n-1) < f(n)$, 
because cutting the rightmost letter in words in 
$P_n$
yields all words in 
$P_{n-1}$. 
Hence 
$f(n)$
satisfies (i). 
Thus 
it suffices to show that if a number
$f(n)$
satisfies (i), (ii), then we have already seen all words in
$P_n$
at
$f(n)$. 
By the equation 
$P_n
=
\{ a C(a) \}_{a \in P_{n-1} \setminus \{ t_{n-1}\}}
\cup
\{ t_{n-1}A, t_{n-1}B \}$, 
this is clear. 
\QED
\end{proof}
{\it Proof of Theorem \ref{f}}  
$\;$
We prove
(\ref{exhausting point})
by induction on 
$k$.
Let
\[
k_0 = \cases{
2 & ($a_1=1$, $a_2=1$) \cr
1 & ($a_1=1$, $a_2 \ge 2$, or $a_1 = 2$) \cr
0 & ($a_1 \ge 3$) \cr
}
\]
so that
$q_{k_0} \le 2 \le q_{k_0+1}-1$.
For
$n = 2, 3, \cdots, q_{k_0+1}-1$, 
it is straightforward to see 
(\ref{exhausting point}).
We next suppose that 
(\ref{exhausting point}) 
holds true for 
$k_0, k_0+1, \cdots, k-1$
and would like to prove it for $k ( \ge k_0 + 1)$. 
Let
$q_k \le n \le q_{k+1}-1$.
We note that 
$t_{n-1}$
is a subword of 
$\pi_{k+1}$
and is not a subword of 
$\pi_k$.
Since
$s_{k+1} \pi_k = s_k \pi_{k+1}$, 
both
$\pi_{k+1} AB$
and
$\pi_{k+1} BA$
are subwords of 
$s_{k+1} s_k$, 
which implies
\begin{equation}
g(n) \le
q_{k+1} + q_k - 1.
\label{upper bound}
\end{equation}
On the other hand, 
since we suppose 
(\ref{exhausting point}) 
for 
$k-1$, 
$f(q_k-1) = 2 q_k - 2$. 
In 
$( v_0(1), v_0(2), \cdots, v_0(2 q_k-2))$, 
we have 
$(2 q_k - 2) - (q_k-1) + 1= q_k$
of words of length 
$q_k-1$.
Since 
$| P_{q_k-1} | = q_k$, 
we find each element of 
$P_{q_k-1}$ 
only once in 
$( v_0(1), v_0(2), \cdots, v_0(2 q_k-2))$.
Hence
$t_{q_k-1}$
appears only once in 
$s_k \pi_k$. 
In what follows, 
we suppose that
$k$
is even.
For 
$k$ odd, we have only to exchange 
$AB$
with 
$BA$
in the argument below. 
Since
$t_{q_k-1}$
is the last subword of length 
$q_k-1$
in 
$s_k \pi_{k-1}$, 
it appears 
$a_{k+1}$
times in 
$s_{k+1}$
and they have the form of 
$t_{q_k-1} BA$. 
The other one
$t_{q_k-1} AB$
appears as the last subword of 
$s_{k+1} s_k$.
Since
$t_{q_k-1}$
appears only once in 
$s_k \pi_k$, 
they exhaust all 
$t_{q_k-1}$'s
in 
$s_{k+1} s_k$. 
Therefore we have
$g(q_k)
=
q_{k+1} + q_k -1$. 
Since 
$f(q_k-1) < g(q_k)$, 
\[
f(q_k) = q_{k+1} + q_k -1, 
\]
by Lemma \ref{g}. 
By the monotonicity of 
$g$, 
we have 
$
g(n) \ge q_{k+1} + q_k -1
$
for
$q_k \le n \le q_{k+1}-1$
and together with
(\ref{upper bound}), 
$
g(n) = q_{k+1} + q_k -1
$
for such 
$n$. 
By Lemma \ref{g} again,
\[ 
f(q_k + j) = g(q_k + j) + j
=
q_{k+1} + q_k - 1 + j
\]
for 
$j = 0, 1, \cdots, q_{k+1} - q_k - 1$
which proves
(\ref{exhausting point})
for 
$k$. 
\QED
%
%%%%%
\subsection{The classification of 
$P_n$
and frequency : Fibonacci case}
We consider 
the classification of words in 
$P_n$
in terms of their frequency. 
We study
Fibonacci case in this subsection. 
Let 
$\{F(n) \}$
be the Fibonacci sequence defined by
\[
F_1 = 1, \quad F_2 = 2, 
\quad
F_{n+1} = F_n + F_{n-1}.
\]
By (\ref{pq}), 
$F_n =q_n = | s_n |$.
\begin{theorem}
\label{Fibonacci case}
Let 
$a_n = 1$.
We can decompose 
$P_n$
into three disjoint subsets 
\[
P_n = A_n \cup B_n \cup C_n
\]
which are given explicitly as follows. 
Let 
$n = F_k + j$, 
$j = 0, 1, \cdots, F_{k-1}-1$.
\begin{eqnarray*}
A_n : \;
&&
( v_0(1), \cdots, v_0(F_k+j) )
\\
&&\qquad\qquad\ddots\\
&&
\qquad
( v_0(1+m), \cdots, v_0(F_k+j+m) )
\quad
(m=0, 1, \cdots, F_{k-1}-j-2)
\\
&&\qquad\qquad\qquad\qquad\ddots\\
&&
\qquad\qquad
( v_0(F_{k-1}-j-1), \cdots, v_0(F_{k+1}-2) )
\\
&& \\
B_n : \;
&&
( v_0(F_{k-1}-j), \cdots, v_0(F_{k+1}-1) )
\\
&&\qquad\qquad\ddots\\
&&\qquad
( v_0(F_{k-1}-j+m), \cdots, v_0(F_{k+1}-1+m) )
\quad
(m=0, 1, \cdots, F_{k-2}+j)
\\
&&\qquad\qquad\qquad\qquad\ddots\\
&&\qquad\qquad
( v_0(F_k), \cdots, v_0(2F_k + j -1) )
\\
&& \\
A_n : \;
&&
( v_0 (F_k+1), \cdots, v_0(2 F_k + j) )
\\
&&\qquad\qquad\ddots\\
&&\qquad
( v_0 (F_k+1+m), \cdots, v_0(2 F_k + j + m) )
\quad
(m=0, 1, \cdots, F_{k-1}-j-2)
\\
&&\qquad\qquad\qquad\qquad\ddots\\
&&\qquad\qquad
( v_0 (F_{k+1}-j-1), \cdots, v_0(F_{k+2}-2) )
\\
&& \\
C_n : \;
&&
( v_0(F_{k+1}-j), \cdots, v_0(F_{k+2}-1) )
\\
&&\qquad\qquad\ddots\\
&&\qquad
( v_0(F_{k+1}-j+m), \cdots, v_0(F_{k+2}-1+m) )
\quad
(m=0, 1, \cdots, j)
\\
&&\qquad\qquad\qquad\qquad\ddots\\
&&\qquad\qquad
( v_0(F_{k+1}), \cdots, v_0(F_{k+2}+j-1) )
\end{eqnarray*}
They are characterized as follows. 
$A_n$
consists of all words in 
$P_n$
contained in 
$s_k \pi_{k-1} (= \pi_{k+1})$
and each ones in
$A_n$
appear twice before arriving at 
$f(n)$, while those in 
$B_n$, $C_n$
appear only once. 
Starting from
$v_0(1)$, 
we see words in 
$A_n$
one after another. 
Words in 
$B_n$
begin to appear after we have seen all words in 
$A_n$. 
Words in 
$A_n$
appear for the second time after we have seen words in 
$B_n$. 
Words in 
$C_n$
appear after we have seen all words in 
$A_n$
for the second time. 
Moreover\\
\noindent
(1)
$| A_n | =(F_{k-1} - j -1)$
and each word in 
$A_n$
has overlaps of length
$j$
or 
$(F_{k-2} + j)$
with itself. \\
(2)
$| B_n | = ( F_{k-2} + j + 1 )$
and each word in 
$B_n$
has overlaps of length 
$j$
or has distance 
$(F_{k-1}-j)$
with itself. \\
(3)
$| C_n | = (j+1)$
and each word in 
$C_n$
has distance 
$(F_{k-1}-j)$ 
or 
$(F_{k+1}-j)$
with itself. 
\end{theorem}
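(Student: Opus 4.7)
The plan is to identify each word in $P_n$ with a starting position $i \in \{1,2,\ldots,F_{k+1}\}$ using Theorem \ref{f}, to exhibit the $F_{k-1}-j-1$ forced coincidences in this list via a palindromic identity, and then to read off $A_n$, $B_n$, $C_n$ from the four consecutive blocks of positions in the statement.

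By Theorem \ref{f}, $f(n)=F_{k+2}+j-1$, so every word of $P_n$ occurs as some length-$n$ substring $v_0(i),\ldots,v_0(i+n-1)$ with $i\in[1,F_{k+1}]$. Using $s_{k+2}=s_{k+1}s_k$ and $s_{k+1}=s_k s_{k-1}$, the prefix $v_0(1),\ldots,v_0(F_{k+2})$ factors as $s_k\cdot s_{k-1}\cdot s_k$; in particular, the four listed intervals $[1,F_{k-1}-j-1]$, $[F_{k-1}-j,F_k]$, $[F_k+1,F_{k+1}-j-1]$, $[F_{k+1}-j,F_{k+1}]$ partition $\{1,\ldots,F_{k+1}\}$ and send to the specific windows listed for $A_n$, $B_n$, $A_n$, $C_n$ in the theorem.

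The critical coincidence is the palindromic identity $s_k\pi_{k-1}=s_{k-1}\pi_k=\pi_{k+1}$ (a level-shift of the formula $s_{k+1}\pi_k=s_k\pi_{k+1}$ used earlier in the paper). This shows that both $s_k s_{k-1}$ and $s_{k-1} s_k$ begin with the same prefix $\pi_{k+1}$ of length $F_{k+1}-2$. Since $v_0(1,\ldots,F_{k+1})=s_k s_{k-1}$ while $v_0(F_k+1,\ldots,F_{k+2})=s_{k-1}s_k$, the shift by $F_k$ carries each length-$n$ substring of $\pi_{k+1}$ starting at $i\in[1,F_{k-1}-j-1]$ onto the equal substring starting at $i+F_k\in[F_k+1,F_{k+1}-j-1]$. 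This produces exactly $F_{k-1}-j-1$ duplications, all of $A_n$-type. Since $|P_n|=n+1$, the count $F_{k+1}-(F_{k-1}-j-1)=F_k+j+1=n+1$ is saturated, so no other coincidences occur among the $F_{k+1}$ positions; in particular the $B_n$ and $C_n$ blocks consist of pairwise distinct words, disjoint from $A_n$ and from each other, with the stated cardinalities $|B_n|=F_{k-2}+j+1$ and $|C_n|=j+1$.

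To obtain the overlap/distance statements (1)–(3), I would extend the window to $s_{k+3}=s_{k+1}s_k s_{k+1}=(s_k s_{k-1})s_k(s_k s_{k-1})$ and record how each word reappears. Each $A_n$ word already has the shift $F_k$ (overlap $j$); a second reappearance inside the block $s_k\cdot s_k$ straddling positions $F_{k+1}+1$ yields shift $F_{k-1}$ (overlap $F_{k-2}+j$). The $B_n$ and $C_n$ words are analysed in the same way, with the two possible shifts in each case read off from the two factorisations $\pi_{k+2}=s_{k+1}\pi_k=s_k\pi_{k+1}$. The main obstacle is precisely this last step: the structural decomposition and the cardinalities drop out of Theorem \ref{f} and the prefix identity with very little effort, but pinning down the exact shifts (and verifying that only the two listed values occur) requires careful bookkeeping of palindromic prefixes at two adjacent levels — which is, in disguise, the three-distance theorem applied at the word level.
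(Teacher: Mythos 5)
Your argument is correct and is essentially the paper's own proof: both rest on the count of $F_{k+1}$ length-$n$ windows before $f(n)=F_{k+2}+j-1$ against $|P_n|=F_k+j+1$, use the palindromic prefix identity $s_k\pi_{k-1}=s_{k-1}\pi_k=\pi_{k+1}$ to exhibit the $F_{k-1}-j-1$ forced duplications as exactly the factors of $\pi_{k+1}$, and relegate the overlap/distance claims (1)--(3) to explicit inspection of the $(k-1,k)$-partition. Your version is somewhat more explicit than the paper's (you locate the second occurrence at the shift $F_k$ and spell out the saturation of the count), but the route is the same.
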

Since each words in 
$A_n$
has overlaps with itself, it can cover 
$v \in \Omega$
if overlap is allowed. 
Penrose tiling
has analogous property
\cite{Gummelt, Komatsu-Nakano}.
\begin{proof}
Before arriving at
$f(n) = f(F_k + j)= F_{k+2}+j-1$, 
we see 
$F_{k+1}$
words of length 
$n$. 
Since 
$| P_n | =(F_k + j + 1)$, 
at least 
$F_{k+1} - (F_k + j + 1) = F_{k-1} - j - 1$
words should appear more than twice. 
On the other hand since 
$
s_{k-1}  \pi_k = \pi_{k+1}, 
$
the words of length 
$n$
contained in 
$\pi_{k+1}$
(there are 
$F_{k-1} - j-1$
of these)
should appear at least twice before arriving at 
$f(n)$. 
Let 
$A_n$
be the set of such words. 
Then 
the words in 
$P_n \setminus A_n$
appears only once. 
The properties of 
$B_n$, $C_n$ 
follows from looking at words in 
$(v_0 (1), \cdots, v_0(f(n)))$
explicitly, and the length of 
overlaps and distance follows from the 
$(k-1, k)$-partition of 
$v_0$. 
\QED
\end{proof}
We 
next compute the frequency of words in 
$P_n$. 
\begin{theorem}
\label{frequency}
Let
$v \in \Omega$
and let 
$n = F_k + j$, $j = 0, 1, \cdots, F_{k-1}-1$.
For
$a \in P_n$,
\[
\lim_{N \to \infty}
\frac { \sharp 
\mbox{ 
$a$'s in 
}
(v (1), v (2), \cdots, v(N))
}
{N}
=
\cases{
\frac {1}{\tau^{k-1}} 
&
$(a \in A_n)$ \cr
\frac {1}{\tau^{k}} 
&
$(a \in B_n)$ \cr
\frac {1}{\tau^{k+1}}
&
$(a \in C_n)$\cr}
\]
\end{theorem}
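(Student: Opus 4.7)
The plan is to invoke unique ergodicity of the underlying rotation to reduce each frequency to a Lebesgue measure on ${\bf T}$, and then to compute these measures using the $\tau:1$ refinement scheme from Section~2.3. For $a=(a_1,\dots,a_n)\in P_n$, set $I(a):=\{\theta\in{\bf T}:v_\theta(1)=a_1,\dots,v_\theta(n)=a_n\}$. Each constraint $v_\theta(m)=a_m$ cuts out a rotated copy of $[1-\alpha,1)$ or $[0,1-\alpha)$, so $I(a)$ is an interval. Since $\alpha=1/\tau$ is irrational, the rotation $R_\alpha:\theta\mapsto\theta+\alpha\pmod 1$ is uniquely ergodic on ${\bf T}$; hence the frequency of $a$ in every $v\in\Omega$ exists and equals $|I(a)|$ independently of $v$. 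This reduces the theorem to the computation of three interval lengths.

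Next I would identify the three possible values of $|I(a)|$. By Section~2.3 the partition $\{I(a):a\in P_n\}$ is produced from $\{[0,1/\tau^2),[1/\tau^2,1)\}$ by iterated $\tau:1$ subdivision, so every $|I(a)|$ has the form $\tau^{-m}$. By the three-distance theorem applied to the orbit of $R_\alpha$, at most three such values occur. Combined with the counts $|A_n|=F_{k-1}-j-1$, $|B_n|=F_{k-2}+j+1$, $|C_n|=j+1$ from Theorem~\ref{Fibonacci case} and the normalization $\sum_{a\in P_n}|I(a)|=1$, the Fibonacci identity $\tau^{k+1}=F_k\tau+F_{k-1}$ forces the three possible lengths to be exactly $\tau^{-(k-1)},\,\tau^{-k},\,\tau^{-(k+1)}$.

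It remains to pair each class with its length. Here I would use the structural characterizations in Theorem~\ref{Fibonacci case}: words in $A_n$ have self-return distances $F_{k-1}$ or $F_k$ (short returns $\Rightarrow$ high frequency $\Rightarrow$ the longest interval, $\tau^{-(k-1)}$), while words in $C_n$ have self-return distances $F_{k+1}$ or $F_{k+2}$ (long returns $\Rightarrow$ the shortest interval, $\tau^{-(k+1)}$), leaving $B_n$ with the intermediate value $\tau^{-k}$. To make this rigorous I would verify the length for a single representative of each class---e.g.\ $a_A=(v_0(1),\dots,v_0(n))\in A_n$---by tracking the initial segment of $\Phi(v_0)$ through the refinement tree and applying the formula $|I_n(O'_1,\dots,O'_n)|=\tau^{-(k_n+2l_n)}$ from Section~2.4.

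The main obstacle is this final matching. The subtlety is that the refinement-tree nodes are indexed by operation sequences of variable length rather than by word length, so some care is needed to identify when the cylinder corresponding to the first $n$ letters stabilizes in the tree. Once the length is verified for one representative per class, the remaining assignments are forced by the counting identity above.
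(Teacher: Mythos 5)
Your overall route is genuinely different from the paper's: you push everything to the circle, reducing the frequency to the Lebesgue measure $|I(a)|$ of a cylinder interval via unique ergodicity of the rotation, and then try to compute the interval lengths. The paper goes the other way around: it uses unique ergodicity of $(\Omega,T)$ only to justify working along a subsequence, and then reads the frequencies off the $(k-1,k)$-partition directly, using the characterization from Theorem~\ref{Fibonacci case} that each word of $A_n$ occurs exactly once in \emph{every} block ($s_k$ or $s_{k-1}$), each word of $B_n$ exactly once per $s_k$-block, and each word of $C_n$ exactly once per $s_{k-1}$-block; since the blocks occur with frequencies $\alpha$ and $1-\alpha$, the three frequencies drop out immediately. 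Note also that in the paper the interval-length statement (Theorem~\ref{three-distance}) is \emph{deduced from} Theorem~\ref{frequency}, so your proposal inverts the paper's logical order and, by invoking the three-distance theorem for the orbit of $R_\alpha$, leans on exactly the classical result that this classification is meant to reprove.

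The genuine gap is in your middle step. Knowing that at most three distinct lengths occur, that each is a power of $\tau^{-1}$, and that the $n+1$ lengths sum to $1$ does \emph{not} force the multiset of lengths: already for $n=2$ one has both $2\tau^{-2}+\tau^{-3}=1$ (the correct decomposition) and $\tau^{-1}+\tau^{-3}+\tau^{-4}=1$, so the normalization together with the counts $|A_n|,|B_n|,|C_n|$ is not conclusive. Your proposed repair---verifying the length for one representative of each class by tracking it through the refinement tree---does not close the gap either, because nothing in your argument shows that all words of a given class share the same interval length; that uniformity is essentially the content of the theorem. To make your route work you would need the inductive bookkeeping the paper sketches after Theorem~\ref{three-distance}: at each step $n\to n+1$ only the interval of the right special factor $t_n$ splits (ratio $\tau:1$), $t_n\in A_n$ for $j\le F_{k-1}-2$ (resp.\ $t_n\in B_n$ at $n=F_{k+1}-1$), and its two children land in $B_{n+1}$ and $C_{n+1}$; tracking this shows each class is a single length-class. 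Alternatively, simply use the occurrence counts per block from Theorem~\ref{Fibonacci case} as the paper does, which avoids the interval computation altogether.
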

\begin{proof}
Due to the unique ergodicity of the dynamical system
$(\Omega, T)$
($(Tv)(n):=v(n+1)$, 
$v\in \Omega$
is the shift operator), 
we can work on some subsequence. 
In the 
$(k-1, k)$-partition, 
the frequency of 
$s_k$, $s_{k-1}$
are equal to 
$\alpha$, 
$1-\alpha$
respectively.
Thus when
$| s_k | + | s_{k-1} | = N$, 
$\sharp \{ s_k \} = N \alpha (1 + o(1))$, 
$\sharp \{ s_{k-1} \} = N (1- \alpha) (1 + o(1))$
so that the number of letters is equal to 
$\left\{
\alpha F_k  + (1 - \alpha ) F_{k-1} 
\right\}N
(1+o(1))$. 
Each word in 
$A_n$
is found in every 
$s_k, s_{k-1}$
while each word in 
$B_n$ (resp. $C_n$)
is found in every 
$s_k$ (resp. $s_{k-1}$).
Hence
\begin{eqnarray*}
r_A &=&
\frac {N}
{(\alpha F_k  + (1 - \alpha ) F_{k-1} )N}
=
\frac {1}{\tau^{k-1}}
\\
r_B &=&
\frac {N\alpha}
{(\alpha F_k  + (1 - \alpha ) F_{k-1} )N}
=
\frac {1}{\tau^{k}}
\\
r_C
&=&
\frac {N(1-\alpha)}
{(\alpha F_k  + (1 - \alpha ) F_{k-1} )N}
=
\frac {1}{\tau^{k+1}}.
\end{eqnarray*}
\QED
\end{proof}
\begin{theorem}
\label{three-distance}
Let
$n = F_k + j$, 
$j = 0, 1, \cdots, F_{k-1}-1$
and 
$w \in P_n$. 
Then
$I (w) := \Psi^{-1}
(\{ v \in \Omega \; | \; 
( v(0), v(1), \cdots, v(n-1) ) = w
\})$
satisfies
\[
| I(w) | = \cases{
\frac {1}{ \tau^{k-1} } & ($w \in A_n$) \cr
\frac {1}{ \tau^{k} } & ($w \in B_n$) \cr
\frac {1}{ \tau^{k+1} } & ($w \in C_n$) \cr
}
\]
In other words, 
the width of intervals in 
${\bf T}$
corresponding to words in 
$A_n, B_n$ 
and
$C_n$
are
$\frac {1}{\tau^{k-1}}$, 
$\frac {1}{\tau^{k}}$
and 
$\frac {1}{\tau^{k+1}}$
respectively.
\end{theorem}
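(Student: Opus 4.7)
The plan is to deduce this directly from the frequency computation in Theorem \ref{frequency} via unique ergodicity, identifying $|I(w)|$ with the measure of the cylinder set it encodes.

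First I would verify that $\Psi : \mathbb{T} \to \Omega$ intertwines the rotation $R_\alpha : \theta \mapsto \theta + \alpha$ with the shift $T$ on $\Omega$. This is immediate from the definition of $v_\theta$: one checks that $(T \circ \Psi)(\theta) = \Psi(R_\alpha \theta)$. Because $\alpha$ is irrational, $R_\alpha$ is uniquely ergodic with Lebesgue measure as its unique invariant probability measure, so the pushforward $\mu_\Omega := \Psi_* \mathrm{Leb}$ is the unique $T$-invariant probability measure on $\Omega$ (the exceptional orbit $\{v'_0(\cdot - m)\}_{m \in \mathbb{Z}}$ appearing in \eqref{hull} has $\mu_\Omega$-measure zero and does not affect any of the measure-theoretic identities).

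Next, for $w = (w_0, \ldots, w_{n-1}) \in P_n$, let $[w] := \{v \in \Omega : (v(0), \ldots, v(n-1)) = w\}$ denote the cylinder. By definition, $\Psi^{-1}([w]) = I(w)$, and hence $\mu_\Omega([w]) = |I(w)|$. Unique ergodicity of $(\Omega, T, \mu_\Omega)$ implies that Birkhoff averages converge \emph{uniformly} to $\mu_\Omega([w])$ for every $v \in \Omega$:
\begin{equation*}
\lim_{N \to \infty}
\frac{\#\{\,1 \le m \le N : (v(m), v(m+1), \ldots, v(m+n-1)) = w\,\}}{N}
=
\mu_\Omega([w]) = |I(w)|.
\end{equation*}
The left-hand side is exactly the frequency of $w$ computed in Theorem \ref{frequency}, which equals $\tau^{-(k-1)}$, $\tau^{-k}$, or $\tau^{-(k+1)}$ according as $w \in A_n$, $B_n$, or $C_n$. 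Combining these two equalities gives the asserted values of $|I(w)|$.

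There is no real obstacle here beyond bookkeeping: the substantive combinatorial content has already been dispatched in Theorems \ref{f}--\ref{frequency}, and the current statement is essentially the geometric reinterpretation of Theorem \ref{frequency} obtained by pushing the unique invariant measure back to $\mathbb{T}$. The only point that merits a line of care is the identification of $\mu_\Omega$ with $\Psi_* \mathrm{Leb}$, which reduces to noting (as already illustrated by \eqref{sharp}) that preimages of cylinders under $\Psi$ are half-open intervals of $\mathbb{T}$.
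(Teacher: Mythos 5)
Your proposal is correct and follows the same route as the paper, which simply states that the theorem ``follows directly from Theorem \ref{frequency} and the ergodic theorem''; you have usefully filled in the details (the conjugacy of $\Psi$ with the rotation, unique ergodicity, and the identification $\mu_\Omega([w]) = |I(w)|$) that the paper leaves implicit. The paper also sketches an alternative direct proof via Theorem \ref{taiou} and induction, but its primary argument is the one you give.
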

Since the endpoints 
in these intervals are equal to the set 
$\{ x \; | \; x \equiv - j  \alpha \pmod 1,  \; j=1, 2, \cdots, n \}$, 
Theorem \ref{three-distance}
implies the three-distance theorem \cite{Sos, AB}. 
\begin{proof}
This follows directly from 
Theorem \ref{frequency}
and the ergodic theorem. 
It is also possible 
to prove Theorem \ref{three-distance} directly 
by using Theorem \ref{taiou} and inductive argument. 
In doing so, 
we note that 
$t_n \in A_n$
for 
$n = F_k + j$, 
$j = 0, 1, \cdots, F_{k-1}-2$
(resp. 
$t_n \in B_n$
for 
$n = F_k + F_{k-1}-1= F_{k+1}-1$)
and 
$t_n A$, $t_n B$
belong to 
$B_{n+1}$, $C_{n+1}$, 
while 
the corresponding intervals 
are divided into two intervals with ratio 
$\tau : 1$.
\QED
\end{proof}
\begin{remark}
Let us call
$w_n \in P_n$
exhausting word if the endpoint of which is located at   
$f(n)$. 
By Theorem \ref{Fibonacci case}, 
$w_{F_k} = A \pi_k A$
(resp. 
$w_{F_k} = B \pi_k B$)
if 
$k$ 
is even (resp. odd). 
Then by the fact that 
$t_n \notin C_n$
and the embedding procedure we see that 
$\Psi^{-1}(w_n)$
is the interval which is closest to the endpoint of 
${\bf T}$ : if
$n=F_k +j$, $j=0, 1, \cdots, F_{k-1}-1$, 
\[
\Psi^{-1}(w_n) = \cases{
\left[
1 - \frac {1}{\tau^{k+1}}
\right)
&
($k$ : even) \cr
\left[ 
0, \frac {1}{\tau^{k+1}}
\right)
&
($k$ : odd) \cr
}
\]
\end{remark}
\subsection{Classification of 
admissible words and frequency : general case}
The results in previous subsection 
is directly extended to the general case, 
though the statement becomes slightly complicated. 
We only state the results.
For given 
$n$, 
take 
$k$
such that
$q_k \le n \le q_{k+1}-1$. \\

\noindent
(1)Classification\\
(i)
$q_k \le n \le q_k + q_{k-1} -1$ : writing 
$n = q_k + j$, 
$j = 0, 1, \cdots, q_{k-1}-1$, 
we have
\begin{eqnarray*}
A_n : \;
&&\left(
v_0(1+m + p q_k), \cdots, v_0(s_k + j + m + p q_k)
\right)
\\
&&m = 0, 1, \cdots,q_{k-1}-j-2, 
\quad
p = 0, 1, \cdots, a_{k+1}
\\
B_n  : \; 
&&\left(
v_0(q_{k-1}-j+m + p q_k), \cdots, v_0(q_k + q_{k-1}-1+m + p q_k)
\right)
\\
&&
m=0, 1, \cdots, q_k - q_{k-1} + j, 
\quad
p = 0, 1, \cdots, a_{k+1}-1
\\
C_n : \;
&&\left(
v_0(q_{k+1}-j+m), \cdots, v_0(q_{k+1}+q_k-1+m)
\right)
\\
&&
m=0, 1, \cdots, j.
\end{eqnarray*}
The order of their appearance is 
$\overbrace{
(A_n, B_n), \cdots, (A_n, B_n)}^{a_{k+1}}, (A_n, C_n)$. 
They are characterized as follows. 
\\
$A_n$ : 
they are the words of length 
$n$
in 
$s_k \pi_{k-1}$, 
$| A_n | = q_{k-1}-1-j$, 
and appear
$(a_{k+1}+1)$-times 
before arriving at 
$f(n)$. 
Each one has overlap of length 
$j $, $(q_k - q_{k-1} + j)$
with itself. 
\\
$B_n$ : 
$| B_n |
=
q_k - | A_n |
=
q_k - q_{k-1} + 1 + j$
and appear 
$a_{k+1}$
before arriving at 
$f(n)$.
Each one has overlap of length
$j$ 
or has distance of length 
$(q_{k-1} - j)$. \\
$C_n$ : 
$| C_n |=j+1$
and appear only once before arriving at 
$f(n)$. 
Each one has distance of length 
$(q_{k+1}-j)$, $(q_{k+1}-q_k-j)$
from itself. \\

\noindent
(ii)
$l q_k + q_{k-1} 
\le n \le 
(l+1) q_k + q_{k-1} -1$, 
$l = 1, 2, \cdots, (a_{k+1}-1)$ : 
writing 
$n = l q_k + q_{k-1} + j$, 
$j = 0, 1, \cdots, (q_k-1)$, 
we have 
\begin{eqnarray*}
A_n : \; 
&&\left(
v_0(1+m + p q_k), \cdots, v_0(l q_k + q_{k-1} + j + m + p q_k)
\right)
\\
&& m = 0, 1, \cdots, q_k - j - 2, 
\quad
p = 0, 1, \cdots, a_{k+1}-l
\\
B_n : \; 
&& \left(
v_0(q_k-j+m + p q_k), \cdots, v_0((l+1) q_k + q_{k-1} - 1 + m + p q_k)
\right)
\\
&& m = 0, 1, \cdots, j, 
\quad
p = 0, 1, \cdots, a_{k+1}-l-1
\\
C_n : \; 
&&\left(
v_0(q_{k+1}-(l-1) q_k - q_{k-1} - j+m), 
\cdots, 
v_0(q_{k+1}+q_k-1+m)
\right)
\\
&&
m = 0, 1, \cdots, (l-1) q_k + q_{k-1} + j.
\end{eqnarray*}

\noindent
The order of their appearance is 
$\overbrace{
(A_n, B_n), \cdots, (A_n, B_n)}^{a_{k+1}-l}, (A_n, C_n)$. 
They are characterized as follows. \\
$A_n$ : 
they are the words of length 
$n$
in 
$s_k^{l+1} \pi_{k-1}$, 
$| A_n |
=
q_k - 1 - j$, 
and appear 
$(a_{k+1}-l+1)$-times 
before arriving at 
$f(n)$.
Each one has overlaps with itself of length
$(l-1) q_k + q_{k-1} + j$, 
$(l-2) q_k + q_{k-1} + j$, 
$\cdots$
$[(2l - a_{k+1}-1) ]_+ q_k + q_{k-1} + j$ 
and 
$j$.\\
\noindent
$B_n$ : 
$| B_n |
=j+1$, 
and appear 
$(a_{k+1} - l)$-times 
before arriving at 
$f(n)$. 
Each one has overlaps with itself of length
$(l-1) q_k + q_{k-1} + j$, 
$(l-2) q_k + q_{k-1} + j$, 
$\cdots$
$[(2l - a_{k+1}) ]_+ q_k + q_{k-1} + j$ 
or has distance of length
$q_k-j$. 
\\
\noindent
$C_n$ : 
$| C_n |
=
(l-1) q_k + q_{k-1} + 1 + j$, 
and appear only once before arriving at 
$f(n)$.
Each one has distance from itself of length
$(a_{k+1} - l) q_k - j$, 
$(a_{k+1}+1- l) q_k - j$.
\\

\noindent
(2)Frequency\\
To compute the frequency, set 
\[
\beta_k := 
[1, a_{k+2}, a_{k+3}, \cdots].
\]
Let
$r_{A_n}$
(resp. $r_{B_n}$, $r_{C_n}$)
be the frequency of the words in 
$A_n$
(resp. $B_n$, $C_n$). 
(i)
$q_k \le n \le q_k + q_{k-1} - 1$ : 
\begin{eqnarray*}
r_{A_n}
&=&
\frac {
\beta_k(a_{k+1}+1)+(1-\beta_k)}
{\beta_k q_{k+1} + (1 - \beta_k) q_k}
\\
r_{B_n}
&=&
\frac {
\beta_k a_{k+1}+(1-\beta_k)}
{\beta_k q_{k+1} + (1 - \beta_k) q_k}
\\
r_{C_n}
&=&
\frac {
\beta_k}
{\beta_k q_{k+1} + (1 - \beta_k) q_k}.
\end{eqnarray*}
(ii)
$l q_k + q_{k-1} \le n \le 
(l+1) q_k + q_{k-1} - 1$, 
$l=1, 2, \cdots, (a_{k+1}-1)$ : 
\begin{eqnarray*}
r_{A_n}
&=&
\frac {
\beta_k (a_{k+1}-l+1) + (1 - \beta_k)}
{\beta_k q_{k+1} + (1 - \beta_k) q_k}
\\
r_{B_n}
&=&
\frac {
\beta_k (a_{k+1}-l) + (1 - \beta_k)}
{\beta_k q_{k+1} + (1 - \beta_k) q_k}
\\
r_{C_n}
&=&
\frac {
\beta_k}
{\beta_k q_{k+1} + (1 - \beta_k) q_k}.
\end{eqnarray*}
%
%%%%%%%%%%%%%%%%%%%%%%%%%%%%%%%%%%%%
\section{Appendix 1 : Basic properties of embedding procedure}
%
%%%%%
\subsection{Fixed point of $\Phi$}
In this subsection, 
we would like to represent the fixed point 
$f = (L,R,R,L,R,L,R,R,L, \cdots) \in W$
of 
$\Phi : \Omega \to W$
in terms of the recursion relation of the sequence of words
$\{ u_n \}_{n=0}^{\infty}$
such that 
$f$
is the right limit of that : 
$f = \lim_{n \to \infty} u_n$.
In whalt follows, we identify
$1 \leftrightarrow R$, 
$0 \leftrightarrow L$.
Let 
\begin{eqnarray*}
u_0 &=& s_0
\\
v_0 &=& s_1
\\
k(0)
&=&
1
\end{eqnarray*}
$k(0)$
stands for the suffix of 
$s_{\sharp}$
in 
$v_0$. 
To go further, we prepare some notations. 
For 
$s \in {\cal S}
:= 
\left\{ s_{l_1} s_{l_2} \cdots s_{l_N}  : 
l_1 < l_2 < \cdots < l_N, 
\;
N \in {\bf N}
\right\}$, 
we define an operation 
${\cal O}(v)$
as follows. 
Arrange its elements like
$(R, L, R, R, L, \cdots)$, 
partition it in terms of 
$R$ 
and 
$RL$ 
like
$((RL), R, (RL), \cdots)$, 
and replace 
$R$(resp. $RL$) by $A_R$ (resp. $A_{RL}$). 
\begin{eqnarray*}
v &=& s_{l_1} s_{l_2} \cdots s_{l_r}
\\
&=&
(O_1, O_2, O_3, \cdots, O_N), 
\quad
O_j = R \mbox{ or } RL
\\
&& \qquad \qquad \downarrow 
\\
{\cal O}(v)
&=&
(A_1, A_2, A_3, \cdots, A_N), 
\quad
A_j = {\cal O}(O_j) =  A_R \mbox{ or } A_{RL}
\end{eqnarray*}
where
${\cal O}(R) = A_R$, 
${\cal O}(RL) = A_{RL}$, 
whose operation on 
${\cal S}_0 :=
\{ s_l \, : \, l \in {\bf N} \}$, 
are defined by
\[
A_R s_k := s_{k+1}, 
\quad
A_{RL} s_k := s_{k+3}, 
\]
and the action of 
${\cal O}(v)$
on
$s_k$
is defined by
\begin{eqnarray*}
(A_1, A_2, A_3, \cdots, A_N) s_k
&:=&
(A_1 s_k) (A_2 A_1 s_k) \cdots 
(A_N A_{N-1} \cdots A_2 A_1 s_k).
\end{eqnarray*}
By using notations above, 
the recursion relation between
$(u_n, v_n, k(n))$
and
$(u_{n+1}, v_{n+1}, k(n+1))$
is given by
\begin{eqnarray*}
u_{n+1} &=& u_n v_n
\\
v_{n+1} &=& {\cal O}(v_n) s_{k(n)}
\\
k(n+1)
&=&
k(n) +  (3 \sharp \{\mbox{$(RL)$'s in $v_n$}\} + \sharp \{\mbox{$R$'s in $v_n$}\} ).
\end{eqnarray*}
$k(n)$
is equal to the suffix of the rightmost word in 
$v_n \in {\cal S}$. 
The followings 
are computations of a few of them. 
\begin{eqnarray*}
&&\cases
{u_1 = u_0 v_0 = s_0 s_1 \cr
%\\
%
v_1 = {\cal O}(v_0)s_{k(0)} = {\cal O}(s_1) s_1 = s_2
\cr
%\\
%
k(1) 
=
k(0) + (3 \sharp \{\mbox{$(RL)$'s in $v_0$}\} + \sharp \{\mbox{$R$'s in $v_0$}\} )
= 1 + 1 = 2,\cr
}\\
%\\
%
&&\cases{
u_2 = u_1 v_1 = u_1 s_2 \cr
%\\
%
v_2 = {\cal O}(v_1) s_{k(1)}
=
{\cal O}(s_2) s_2
=
{\cal O}(RL) s_2 = s_5
\cr
k(2) = k(1) 
+ (3 \sharp \{\mbox{$(RL)$'s in $v_1$}\} + \sharp \{\mbox{$R$'s in $v_1$}\} )
=5,}
\\
&&\cases{
u_3 = u_2 v_2 = u_2 s_5
\cr
v_3 = {\cal O}(v_2) s_{k(2)}
= {\cal O}(s_5) s_5
=
{\cal O}((RL) R (RL) (RL) R) s_5
\cr
\quad
=
(A_{RL}, A_R, A_{RL}, A_{RL}, A_R) s_5
\cr
\quad=
(A_{RL} s_5)
(A_R A_{RL} s_5)
(A_{RL} A_R A_{RL}  s_5)
\cr
\qquad\qquad
(A_{RL} A_{RL} A_R A_{RL} s_5)
(A_R A_{RL} A_{RL} A_R A_{RL} s_5)
\cr
\quad=
s_8 s_9 s_{12} s_{15} s_{16}
\cr
k(3) =
k(2)
+ (3 \sharp \{\mbox{$(RL)$'s in $v_2$}\} + \sharp \{\mbox{$R$'s in $v_2$}\} )
= 16
}
\end{eqnarray*}
%

%%%%%
\subsection{Concrete examples}
From the discussion in subsection 2.2, 
$\Phi (v)$
is seen to reflect some combinatorial aspects of 
$v \in \Omega$, 
and 
$\Phi (v)$
in turn can be derived 
by Theorem \ref{taiou}. 
In this subsection we explicitly give 
$\Phi (v)$
for some examples of
$v$ : 
$v_0(\cdot - m)$, $v'_0(\cdot - m)$ 
and 
$v_{AA}$, $v_A$, $v_B$
defined later. \\

\noindent
(1)
$v_0(\cdot - m)$, $v'_0(\cdot - m)$ : 
it is easy to see
\begin{eqnarray}
\Phi (v_0) =(L, L, \cdots), 
\quad
\Phi (v'_0) = (R, L, L, \cdots).
\label{v0}
\end{eqnarray}
Moreover
\begin{eqnarray*}
\Omega_L
:=
\{ v_0 (\cdot - m), \; v'_0 (\cdot - m) 
\; | \; 
m \ge 0 \}, 
\quad
\Omega_R
:=
\{ v_0 (\cdot + m), \; v'_0 (\cdot + m) 
\; | \; 
m \ge 1 \}
\end{eqnarray*}
satisfy
\begin{eqnarray}
\Phi (\Omega_L )
&=&
\{ (O_1, O_2, \cdots) 
\; | \; 
O_j = L
\mbox{ for large } j \}
\label{L}
\\
\Phi (\Omega_R )
&=&
\{ (O_1, O_2, \cdots) 
\; | \; 
O_j = R
\mbox{ for large } j \}
\label{R}
\end{eqnarray}
In fact, to see (\ref{L})
we note that 
$\Psi^{-1}(v_0 (\cdot - m)) \in D_-$ ($m \ge 0$)
by definition. 
Therefore, by a successive application of 
$R$ or $L$, 
say after the $k$-th step
we reach the interval with 
$\Psi^{-1}(v_0 (\cdot - m))$
its left endpoint, and then we set 
$O_{k+1} = R$, 
$O_{k+2} = O_{k+3} = \cdots = L$.
For 
$\Phi (v'_0 (\cdot - m))$, 
we approach 
$\Psi^{-1}(v_0 (\cdot - m))$
from the opposite direction. 
Conversely, if 
$w \in 
\{ (O_1, O_2, \cdots) 
\; | \; 
O_j = L
\mbox{ for large } j \}$, 
we have 
$(\Phi \circ \Psi)^{-1}(w) \in D_-$
by Theorem \ref{taiou}. 

To see (\ref{R}), 
we recall that 
$m \in {\bf N}$
has the following unique representation
\[
m = F_{k_1} + F_{k_2} + \cdots+F_{k_N}, 
\quad
l_j := k_j - k_{j-1} \ge 2, 
\;
j = 2, 3, \cdots, N, 
\]
by which 
$\Phi(v_0(\cdot + m))$, $\Phi(v'_0(\cdot + m))$
are given explicitly below.\\
(i)
$k_1$ : odd
\begin{eqnarray*}
&&
\Phi(v_0(\cdot + m))
=
\Phi(v'_0(\cdot + m))
\\
&=&
(R, 
\overbrace{L, \cdots, L}^{\frac {k_1-1}{2}}, 
\overbrace{R, \cdots, R}^{l_2-1}, L, 
\overbrace{R, \cdots, R}^{l_3-2}, L, 
\cdots,  
\overbrace{R, \cdots, R}^{l_N-2}, L,  
R, R, \cdots
)
\end{eqnarray*}
(ii)
$k_1$ : even
\begin{eqnarray*}
&&
\Phi(v_0(\cdot + m))
=
\Phi(v'_0(\cdot + m))
\\
&=&
(
\overbrace{L, \cdots, L}^{\frac {k_1}{2}}, 
\overbrace{R, \cdots, R}^{l_2-1}, L, 
\overbrace{R, \cdots, R}^{l_3-2}, L, 
\cdots,  
\overbrace{R, \cdots, R}^{l_N-2}, L,  
R, R, \cdots
)
\end{eqnarray*}
The converse is clear. 

For the general case, given 
$\theta \in {\bf T}$
we take a sequence
$\{ N_k \}_{k = 1}^{\infty}$
with 
$N_k \alpha \downarrow \theta$
in
${\bf T}$, 
and then the above argument tells us how to obtain 
$\Phi (v_{\theta})$. \\

\noindent
(2)
symmetric sequences : 
$\Omega$
contains words with mirror symmetry 
\begin{eqnarray*}
v_{AA}
&:=&
\cdots
110101 | 101011
\cdots
=:
h_{AA}^{-1}h_{AA}
\\
v_A
&:=&
\cdots
10110 \underline{1} 01101
\cdots
=: h_A^{-1} A h_A
\\
v_B
&:=&
\cdots
1011 \underline{0} 1101
\cdots
=: h_B^{-1} B h_B
\end{eqnarray*}
which do not belong to 
$\Omega_R \cup \Omega_L$.\\
(i)
$v_{AA}$
 : setting 
$v_{AA}(-1) = v_{AA}(0) = 1$
gives  
$\theta_{AA} := \Psi^{-1}(v_{AA}) = \frac 12$
and
\begin{eqnarray*}
\Phi (v_{AA})
&=&
(R, R, L, R, L, \cdots).
\end{eqnarray*}
(ii)
$v_A$ : 
setting 
$v_A (-1) = 1$
gives 
$\theta_A  := \Psi^{-1}(v_{A})= \frac {\alpha}{2}$
and 
\begin{eqnarray*}
\Phi(v_A)
=
(L, R, L, R, \cdots).
\end{eqnarray*}
(iii)
$v_B$ : 
setting 
$v_B(1) = 0$
gives 
$\theta_B  := \Psi^{-1}(v_{B})= \frac 12 - \frac {3}{2}\alpha$
and 
\begin{eqnarray*}
\Phi (v_B)
=
(R, L, R, L, \cdots).
\end{eqnarray*}
%

%%%%%
\subsection{Symmetric words}
In this subsection, 
we further study some combinatorial properties of 
$v_{AA}$, $v_A$ 
and 
$v_B$.
When 
$n$
is odd, 
$s_{n+3}
=
s_{n+1} s_n s_{n+1}
=
\pi_{n+1} (AB) \pi_n (BA) \pi_{n+1} (AB)$
from which we have
\begin{eqnarray}
\pi_{n+3}
=
\pi_{n+1} (AB) \pi_ n (BA) \pi_{n+1}
\label{asterisque}
\end{eqnarray}
For even 
$n$
we exchange 
$AB$
with 
$BA$. 
Hence 
$\pi_n$
and 
$\pi_{n+3}$
have the same symmetry and 
$v_A$, $v_B$ and $v_{AA}$
can be derived by using this equation for 
$n=3k$, $n=3k+1$ and $n=3k+2$
respectively. 
In fact, define
$h_n$
by the following equation. 
\[
s_n = : 
\cases{
h_n^{-1} \cdot A \cdot h_n, 
&
($n = 3k = 3,6,9, \cdots$)
\cr
h_n^{-1} \cdot B \cdot h_n, 
&
($n = 3k + 1 = 4, 7, 10, \cdots$)
\cr
h_n^{-1} \cdot h_n, 
&
($n = 3k + 2 = 5, 8, 11, \cdots$)
\cr
}
\]
By (\ref{asterisque}) we have
\[
h_{n+3} = \cases{
h_n (BA) \pi_{n+1} 
&
($n$ : odd)\cr
h_n (AB) \pi_{n+1} 
&
($n$ : even)\cr
}
\]
whose right limits coincide with 
$h_A$, $h_B$
and 
$h_{AA}$
respectively. 

We next study 
some substitutive properties of 
$v_{AA}$. 
Recall 
$h_{AA} \in \{ 0, 1 \}^{{\bf N}}$
is defined by the equation
$v_{AA} = h_{AA}^{-1} h_{AA}$. 
\begin{proposition}
(1)
$h_{AA}$
is the fixed point of the following substitution rule, 
\begin{eqnarray*}
\sigma : \quad&&
A \mapsto AB', A' \mapsto BA', 
\\
&&
B \mapsto A, B' \mapsto A'
\end{eqnarray*}
under the identification of 
$A$, $B$
with
$A'$, $B'$.
\\
(2)
Define the sequence of words
$\{ t_n \}_{n \ge 0}$ 
by 
\[
t_{n+1} = t_n \overline{t_{n-1}}, 
\;
n \ge 1, 
\quad
t_0 = B, \quad t_1 = A. 
\]
where
$\overline{s}$
is obtained by exchanging 
$A$, $B$
with
$A'$, $B'$
in 
$s^{-1}$. 
Let 
$t$ 
be the right limit of 
$t_n$
(we identify 
$A$, $B$
with 
$A'$, $B'$ 
in 
$t$). 
Then 
$t = h_{AA}$.
\end{proposition}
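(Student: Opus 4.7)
The strategy is to prove Part (2) first by an algebraic manipulation with $\sigma$, and then to deduce Part (1) by matching the fixed point of $\sigma$ with $h_{AA}$ after identification.

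The central algebraic fact is that $\sigma$ commutes with the bar operation $\overline{\cdot}$: for every word $w$ over $\{A,A',B,B'\}$ one has $\overline{\sigma(w)} = \sigma(\overline{w})$. This I would verify on the four letters, e.g.\ $\overline{\sigma(A)} = \overline{AB'} = BA' = \sigma(A') = \sigma(\overline{A})$, and the three remaining cases are analogous. It extends to words using $\overline{xy} = \overline{y}\,\overline{x}$ together with the morphism property of $\sigma$.

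For Part (2), set $u_n := \sigma^{n-1}(A)$, so that $u_1 = A = t_1$ and $u_2 = AB' = t_2$. I would prove by induction the identity $u_{n+1} = u_n\,\overline{u_{n-1}}$. Assuming $u_n = u_{n-1}\,\overline{u_{n-2}}$ and applying $\sigma$, the commutation gives
\[
u_{n+1} = \sigma(u_{n-1})\,\sigma(\overline{u_{n-2}}) = u_n\,\overline{\sigma(u_{n-2})} = u_n\,\overline{u_{n-1}}.
\]
Since the initial data and recurrence match those of $t_n$, we conclude $t_n = u_n$ for all $n\ge 1$, and hence $t = \sigma^{\infty}(A)$ as a right limit.

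For Part (1), Part (2) reduces the claim to showing that $\sigma^{\infty}(A)$, after the identification $A\leftrightarrow A'$, $B\leftrightarrow B'$, coincides with $h_{AA}$. I would argue by induction on $n$ that the identified prefix $\tilde t_n$ equals the length-$F_n$ prefix of $h_{AA}$. Since the bar operation becomes a plain reversal under identification, the inductive step comes down to the mirror relation
\[
h_{AA}[F_n : F_{n+1}] = \bigl(h_{AA}[0 : F_{n-1}]\bigr)^{-1},
\]
which I would deduce from the palindromic recursion $\pi_{3k+5} = \pi_{3k+3}(XY)\pi_{3k+2}(YX)\pi_{3k+3}$ of equation~(\ref{asterisque}): since $h_{3k+2}$ is by definition the right half of the palindrome $\pi_{3k+2}$, its reverse is its left half, and aligning the Fibonacci-length blocks of $\tilde t_n$ with these nested palindromes produces the desired mirror symmetry.

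The main obstacle is the bookkeeping in this last step: matching the Fibonacci indices $F_n$ with the indices $3k+2$ of the palindromic recursion, which requires a case split on the parity of $n$ (corresponding to the alternation between $AB$ and $BA$ in the recursion for $\pi_n$). No new tools are needed beyond the commutation already established for Part (2) and the palindromic structure in~(\ref{asterisque}), but the indexing must be tracked carefully.
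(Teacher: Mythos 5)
Your first step coincides with the paper's: the commutation $\overline{\sigma(w)}=\sigma(\overline{w})$, verified on the four letters and extended via the morphism property and $\overline{xy}=\overline{y}\,\overline{x}$, gives $t_n=\sigma^{n-1}(A)$ by induction, so $t=\sigma^{\infty}(A)$. (Strictly speaking this identity is the bridge between parts (1) and (2) rather than a proof of either; neither part is finished until $\sigma^{\infty}(A)=h_{AA}$ is also established, so your division of labour between the two parts is mislabelled, though the overall logic is sound.)

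For the identification $t=h_{AA}$ you depart from the paper, which invokes Lemma \ref{tandh}: explicit formulas $t'_{3n+2}=h_{3n+2}(BA)h_{3n+2}^{-1}$, etc., for all three residues of the index modulo $3$, from which $h_{3n+2}$ is a prefix of $t$ for every $n$ and the right limits agree. You instead propose the single mirror recursion $h_{AA}[F_n:F_{n+1}]=(h_{AA}[0:F_{n-1}])^{-1}$ on Fibonacci-length prefixes; combined with $t'_{n+1}=t'_n(t'_{n-1})^{-1}$ and the matching initial data this would indeed finish both parts, and the relation itself is correct (I checked it through $n=7$). The gap is in your justification of it. You claim it follows from (\ref{asterisque}) by ``aligning the Fibonacci-length blocks with the nested palindromes,'' but the natural cut points of the recursion $h_{3k+5}=h_{3k+2}(XY)\pi_{3k+3}$ occur at $|h_{3k+2}|=F_3+F_6+\cdots+F_{3k}$ (that is, at $3,16,71,\dots$), which are not Fibonacci numbers for $k\ge 2$; the positions $F_n$ at which your relation must be verified fall strictly inside the $\pi_{3k+3}$ blocks, so the palindromic symmetry of $\pi_{3k+2}$ about the centre of $v_{AA}$ does not directly yield the claimed identity, and a parity split alone will not suffice --- one has to track the structure through all three residues of the index modulo $3$ (times the parity of the $AB/BA$ alternation). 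That bookkeeping is exactly the content of the paper's Lemma \ref{tandh}, which you have in effect replaced by an equivalent unproved statement. One point in your favour: you correctly read $h_n$ as the right half of $\pi_n$; the paper's displayed definition uses $s_n$, which cannot be literally correct since $s_5=10110101$ is not of the form $h^{-1}h$.
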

\begin{proof}
We can show 
$t_n = \sigma^n (A)$
by the inductive argument and the equation  
$\sigma(\overline{s}) = \overline{ \sigma (s)}$. 
$t = h_{AA}$
then follows from 
Lemma \ref{tandh} given below. 
\QED
\end{proof}
Let 
$t'_n$
be the word obtained by identifying 
$A$, $B$
with 
$A'$, $B'$
in 
$t_n$. 
\begin{lemma}
\label{tandh}
For 
$n \ge 1$
odd, we have
\begin{eqnarray*}
t'_{3n+2}
&=&
h_{3n+2} (BA) h_{3n+2}^{-1}
\\
t'_{3n+3}
&=&
h_{3n+2} (BA) \pi_{3n+1} (AB) h_{3n+2}^{-1}
\\
t'_{3n+4}
&=&
h_{3n+2} (BA) h_{3n+5}^{-1}
=
h_{3n+5} (AB) h_{3n+2}^{-1}
\end{eqnarray*}
(for even 
$n$, 
we exchange 
$AB$
with 
$BA$)
\end{lemma}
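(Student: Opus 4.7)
The plan is to prove the three identities by induction on the odd integer $n \ge 1$; the parallel statement for even $n$ (with $AB$ and $BA$ exchanged) will follow by an entirely analogous argument. The essential tool is the ``primed'' form of the defining recursion $t_{k+1} = t_k \overline{t_{k-1}}$: under the identification of $A$ with $A'$ and $B$ with $B'$, the overline reduces to ordinary mirror reversal, so
\[
t'_{k+1} = t'_k \cdot (t'_{k-1})^{-1}.
\]

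For the base case $n = 1$, I would iterate this recursion from $t'_0 = B$, $t'_1 = A$ to obtain $t'_5$, $t'_6$, $t'_7$ explicitly, and then check by direct inspection that they match the three asserted forms. This needs only the small-index data $h_5 = ABA$, $\pi_4 = ABA$, together with $h_8 = h_5(BA)\pi_6$ obtained from the $h$-recursion $h_{n+3} = h_n(BA)\pi_{n+1}$ at index $n = 5$.

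For the inductive step, assuming the three identities at level $n$, the plan is to compute $t'_{3n+5}, t'_{3n+6}, t'_{3n+7}$ (and then $t'_{3n+8}, t'_{3n+9}, t'_{3n+10}$) successively via the primed recursion, substituting the inductive formulas. A key observation that drives the simplification is that $t'_{3n+3} = h_{3n+2}(BA)\pi_{3n+1}(AB)h_{3n+2}^{-1}$ is itself a palindrome: since $\pi_{3n+1}^{-1} = \pi_{3n+1}$ and the outer wrapping $w(BA)\cdot(AB)w^{-1}$ reverses to itself, we have $(t'_{3n+3})^{-1} = t'_{3n+3}$. Hence
\[
t'_{3n+5} = t'_{3n+4}\cdot(t'_{3n+3})^{-1} = h_{3n+5}(AB)h_{3n+2}^{-1}\cdot h_{3n+2}(BA)\pi_{3n+1}(AB)h_{3n+2}^{-1},
\]
which I would collapse into the target template using the $h$-recursion $h_{3n+5} = h_{3n+2}(BA)\pi_{3n+3}$ (and then $h_{3n+8} = h_{3n+5}(AB)\pi_{3n+6}$), together with the palindromicity $\pi_k^{-1} = \pi_k$.

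The main obstacle will be the string-level bookkeeping required to recognize the intermediate concatenations as the correct palindromic blocks. Identity (\ref{asterisque}), expressing $\pi_{n+3}$ in terms of $\pi_{n+1}$ and $\pi_n$, will be invoked at each simplification step to identify middle chunks and merge adjacent factors. By contrast, the consistency of the two equivalent forms in (iii) --- namely $h_{3n+2}(BA)h_{3n+5}^{-1} = h_{3n+5}(AB)h_{3n+2}^{-1}$ --- is immediate from the $h$-recursion expanded inside the inverse, since $\pi_{3n+3}^{-1} = \pi_{3n+3}$, and therefore requires no separate argument.
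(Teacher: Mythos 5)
The paper states Lemma \ref{tandh} without any proof, so there is nothing to compare your argument against; judged on its own, your plan is sound and I have checked that its key steps go through. The reduction $t'_{k+1}=t'_k\,(t'_{k-1})^{-1}$ is correct, the base case $t'_5,t'_6,t'_7$ checks out numerically ($t'_5=ABABAABA=h_5(BA)h_5^{-1}$ with $h_5=ABA$, etc.), the palindromicity of $t'_{3n+3}$ is genuine (it has the form $w(BA)\pi(AB)w^{-1}$ with $\pi$ a palindrome), and the consistency of the two forms in the third identity is indeed immediate from $h_{3n+5}=h_{3n+2}(BA)\pi_{3n+3}$. Three small points. First, the cancellations you need are not quite delivered by (\ref{asterisque}); what actually drives each collapse is the pair of two-step identities $\pi_{m+2}=\pi_m\,c_m\,\pi_{m+1}=\pi_{m+1}\,c_{m+1}\,\pi_m$ (where $c_m=(AB)$ or $(BA)$ is the suffix of $s_m$), together with $h_{3n+2}^{-1}h_{3n+2}=\pi_{3n+2}$; for example $t'_{3n+5}=t'_{3n+4}t'_{3n+3}=h_{3n+5}(AB)\pi_{3n+2}(BA)\pi_{3n+1}(AB)h_{3n+2}^{-1}$ matches $h_{3n+5}(AB)h_{3n+5}^{-1}$ precisely because $\pi_{3n+3}=\pi_{3n+2}(BA)\pi_{3n+1}$. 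You should state and prove these two-step identities explicitly (they follow from $s_{m+2}=s_{m+1}s_m$ and the fact that $s_ms_{m+1}$ and $s_{m+1}s_m$ differ only in their last two letters); (\ref{asterisque}) is their three-step consequence. Second, your induction as written is slightly inconsistent: you announce induction over odd $n$ with the even case "by an analogous argument," but your inductive step actually passes through the even case $n+1$ on the way to $n+2$, so the clean formulation is a single induction over all $n\ge 1$ establishing both parities simultaneously. Third, note that the paper's displayed definition of $h_n$ via $s_n=h_n^{-1}\cdot A\cdot h_n$ etc.\ cannot be taken literally ($s_n$ is not a palindrome); it must be read with $\pi_n$ in place of $s_n$, which is what your value $h_5=ABA$ implicitly assumes and is consistent with the recursion $h_{n+3}=h_n(BA)\pi_{n+1}$.
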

%

%%%%%
\section{Appenix 2 : robustness against local move}
In the 
Fibonacci case
($\alpha = \frac {1}{\tau}$), 
we can exchange 
$10$
with 
$01$
in 
$v \in \Omega$
at some site. 
A natural question
is whether it remains in the hull after this exchange. 
Let 
${\cal E}^{(i, i+1)}$
be this exchange operation at site 
$i, i+1$
(we always assume 
$v(i) \ne v(i+1)$). 
We can see
${\cal E}^{(-1, 0)}v_0 = v'_0$
which is, however, essentially the only case where this exchange is possible. 
\begin{theorem}
\label{local move}
Let 
$\alpha \in {\bf Q}^c \cap (0,1)$. 
If 
$v \in \Omega \setminus (\Omega_R \cup \Omega_L)$, 
then
${\cal E}^{(i, i+1)} v \notin  \Omega$
for any $i$.
\end{theorem}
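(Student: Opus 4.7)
The plan is to identify $\Omega\setminus(\Omega_R\cup\Omega_L)$ concretely as a set of $v_\theta$'s and then rule out any single-site swap by a Weyl equidistribution argument. By the hull representation (\ref{hull}), each element of $\Omega$ equals either some $v_\theta$ ($\theta\in{\bf T}$) or $v'_0(\cdot-m)$ ($m\in{\bf Z}$). Since $v_0(\cdot-m)=v_{-m\alpha}$ and $v'_0(\cdot-m)=v'_{-m\alpha}$, the ``rational'' orbit $\Omega_R\cup\Omega_L$ corresponds exactly to the countable set $\theta\in D:=\{m\alpha\bmod 1:m\in{\bf Z}\}$. For $\theta\notin D$ the orbit $\{n\alpha+\theta\bmod 1\}_{n\in{\bf Z}}$ never hits the boundary points $\{0,1-\alpha\}$, so $v_\theta=v'_\theta$; consequently $v\in\Omega\setminus(\Omega_R\cup\Omega_L)$ is the same as $v=v_\theta$ for some $\theta\notin D$.

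The technical core will be the following equidistribution fact, which I would establish first. For any two distinct phases $\theta_1,\theta_2\in{\bf T}$, the sequences $v_{\theta_1}$ and $v_{\theta_2}$ (and, with the obvious modification, the mixed pair $v_{\theta_1}$ and $v'_{\theta_2}$) disagree at infinitely many indices $n\in{\bf Z}$. Indeed, the set $S\subset{\bf T}$ on which $1_{[1-\alpha,1)}(\,\cdot\,)$ and $1_{[1-\alpha,1)}(\,\cdot\,+(\theta_2-\theta_1))$ differ is a union of two half-open arcs of total positive length, and by Weyl equidistribution of $\{n\alpha+\theta_1\bmod 1\}$ infinitely many $n$ fall into $S$. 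Switching one boundary convention from $[1-\alpha,1)$ to $(1-\alpha,1]$ alters the disagreement set by at most the two points $\{0,1-\alpha\}-\theta_i$, which does not affect the infinite-disagreement conclusion.

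Granted this fact, the proof is a brief case analysis. Assume for contradiction that $v=v_\theta$ with $\theta\notin D$ and that $v':={\cal E}^{(i,i+1)}v\in\Omega$. Then $v$ and $v'$ coincide outside $\{i,i+1\}$, so they differ at only two positions. If $v'=v_{\theta'}$ for some $\theta'\in{\bf T}$, the equidistribution fact forces $\theta=\theta'$ and hence $v=v'$, contradicting the swap. If instead $v'=v'_0(\cdot-m)=v'_{-m\alpha}$ for some $m\in{\bf Z}$, the mixed-pair version of the fact forces $\theta=-m\alpha\bmod 1$, i.e.\ $\theta\in D$, again a contradiction.

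The main obstacle I anticipate is careful bookkeeping around the indicator boundaries $\{0,1-\alpha\}$: one must confirm that for $\theta\notin D$ the orbit $\{n\alpha+\theta\}$ avoids these points (so $v_\theta=v'_\theta$, preventing a hidden identification of $v$ with a primed shift) and one must track the at most two exceptional indices at which the $[\,\cdot\,)$ versus $(\,\cdot\,]$ conventions can change a value in the mixed-pair comparison. Once these small subtleties are dispatched, the argument reduces to the standard density statement for $\{n\alpha\bmod 1\}$ together with the explicit hull description (\ref{hull}).
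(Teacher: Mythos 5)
Your argument is correct, but it takes a genuinely different route from the paper's. The paper proves Theorem \ref{local move} combinatorially, via Lemma \ref{n}: assuming ${\cal E}^{(m-1,m)}v\in\Omega$, an induction on $n$ shows that the swap site must sit, for \emph{every} $n$, at a junction between two $s_n$-blocks of the $(n-1,n)$-partition in one of two configurations that alternate with $n$; this pins $v$ down as a one-sided limit of the $s_n$'s anchored at that junction, i.e.\ forces $v\in\Omega_R\cup\Omega_L$. You instead work on the rotation side: using the hull representation (\ref{hull}) you identify $\Omega\setminus(\Omega_R\cup\Omega_L)$ with $\{v_\theta:\theta\notin D\}$, $D=\{m\alpha\bmod 1\}$, and observe that distinct phases produce sequences disagreeing at infinitely many indices (the symmetric difference of two distinct arcs of length $\alpha$ contains an open arc, so density of the orbit $\{n\alpha+\theta\}$ suffices --- full Weyl equidistribution is not even needed), whereas a swap changes only two letters; the boundary-convention bookkeeping for the primed sequences costs at most two further indices. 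Both proofs are valid for all irrational $\alpha$. The trade-off: the paper's partition lemma is self-contained at the symbolic level, is only sketched (``proved by induction''), and yields extra structural information that the paper reuses for the analogous statement about exchanging $s_k$ with $s_{k-1}$ in the $(k-1,k)$-partition; your argument is shorter and actually proves a stronger statement --- no modification of $v_\theta$ ($\theta\notin D$) at a nonempty finite set of sites can remain in $\Omega$ --- but it takes the hull description (\ref{hull}) as a black box rather than deriving the obstruction from the word combinatorics themselves.
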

As a preparation, we prove
\begin{lemma}
\label{n}
Let 
$v \in \Omega$.
If 
${\cal E}^{(m-1, m)}v \in \Omega$
for some 
$m$, 
then for any 
$n \ge 0$
the 
$(n-1, n)$-partition of 
$v$
has one of the following form. 
\begin{eqnarray*}
(a)\quad
&&
\fbox{\mbox{$s_{n-1}$}}
\fbox{ \mbox{　　$s_n$　　}}
\Bigg|
\fbox{ \mbox{　　$s_n$　　}}
\\
(b)\quad
&&
\fbox{ \mbox{　　$s_n$　　}}
\fbox{\mbox{$s_{n-1}$}}
\Bigg|
\fbox{ \mbox{　　$s_n$　　}}
\end{eqnarray*}
where 
$m$
is the site left to 
$\big |$.
Furthermore, if the 
$(n-1, n)$-partition
satisfies 
(a)
(resp. (b)), then the 
$(n, n+1)$-partition
satisfies 
(b)(resp. (a)), 
where 
$n$
is replaced by 
$n+1$. 
\end{lemma}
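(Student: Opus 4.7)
I plan to proceed by induction on $n$, establishing simultaneously that for each $n \ge 0$ the $(n-1,n)$-partition of $v$ around position $m$ has form (a) or (b), and that the form alternates as $n$ increases by one. Without loss of generality I may assume $(v(m-1),v(m))=(1,0)$; the case $(0,1)$ is handled by symmetry.

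For the base case the $(n-1,n)$-partition is essentially read off letter by letter, so a short direct enumeration of the possible arrangements of $v(m-1), v(m), v(m+1)$ (and, if needed, one further letter on each side) suffices. The constraint that the exchanged sequence $v' := \mathcal{E}^{(m-1,m)}v$ must also lie in $\Omega$ eliminates every arrangement except (a) and (b); the excluded arrangements each produce in $v'$ a short sub-word which is not a factor of any circle map sequence.

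For the inductive step I would use the substitution $s_{n+1} = s_n^{a_{n+1}} s_{n-1}$ in combination with Corollary \ref{corollary}: between any two consecutive $s_{n-1}$-blocks in the $(n-1,n)$-partition there sit either $a_{n+1}$ or $a_{n+1}+1$ consecutive $s_n$-blocks, the first option giving two adjacent $s_{n+1}$-blocks at level $n+1$ and the second giving $s_{n+1}$, an isolated $s_n$, then $s_{n+1}$. Assume (a) at level $n$, so the local picture is $\ldots s_{n-1}\, s_n \mid s_n\ldots$\,. The $s_n$-block ending at $m$ must become the isolated $s_n$ at level $n+1$; otherwise it would merge into an $s_{n+1}$ and the bar $\mid$ would cease to be a block boundary at level $n+1$. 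This forces the count of $s_n$'s between the shown $s_{n-1}$ and the next one to be exactly $a_{n+1}+1$, and so the $a_{n+1}$ $s_n$-blocks starting at $m+1$ combine with the subsequent $s_{n-1}$ into an $s_{n+1}$. The local $(n, n+1)$-picture is therefore $\ldots s_{n+1}\, s_n \mid s_{n+1}\ldots$, which is configuration (b) at level $n+1$; the dual implication, (b) at level $n$ implies (a) at level $n+1$, is entirely analogous.

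The main obstacle, and the point at which the hypothesis $v' \in \Omega$ is genuinely used, is the exclusion of the alternative grouping in which the $s_n$-block ending at $m$ is internal to some $s_{n+1}$, so that $m/m{+}1$ fails to be a boundary at level $n+1$. In this excluded sub-case, the swap at $(m-1,m)$ modifies letters inside (or across the boundary of) an $s_n$-block in a way that produces, in $v'$, a short factor which cannot be written as a concatenation of $s_{n-1}$'s and $s_n$'s; this violates Lemma \ref{partition} applied to $v'$ and therefore contradicts $v' \in \Omega$. This case analysis is the true heart of the argument; once it is carried out, the inductive propagation of (a)/(b) and their alternation is routine.
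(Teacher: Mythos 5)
Your overall strategy --- induction on the level $n$, driven by the desubstitution $s_{n+1}=s_n^{a_{n+1}}s_{n-1}$ and Corollary \ref{corollary} --- is the one the paper indicates (it says only that Lemma \ref{n} ``is proved by induction''), and your observation in case (a) that the $s_n$ ending at $m$ can only survive as the isolated $s_n$ of the $(n,n+1)$-partition (since $s_{n+1}$ ends in $s_{n-1}$, not $s_n$) is correct. But the two steps you defer are exactly the content of the lemma, and neither is routine. First, the exclusion of the bad grouping is asserted, not proved: you say the swap would produce in $\mathcal{E}^{(m-1,m)}v$ ``a short factor which cannot be written as a concatenation of $s_{n-1}$'s and $s_n$'s,'' but Lemma \ref{partition} guarantees that \emph{every} element of $\Omega$ admits such a partition, so a contradiction only arises if you actually exhibit a word in the swapped sequence that is not a factor of $v_0$. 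Doing so requires the concrete local structure near the bar (e.g.\ the palindrome decomposition $s_n=\pi_n(10)$ or $\pi_n(01)$ and the fact from subsection 2.2 that $\underline{s_n}$ is followed by $\pi_{n+1}(10)$ or $\pi_{n+1}(01)$); you explicitly leave this out. Second, the implication (b)~$\Rightarrow$~(a) is \emph{not} ``entirely analogous.'' In case (b) the block ending at $m$ is $s_{n-1}$, which automatically closes an $s_{n+1}$, so persistence of the boundary is free; what must be proved instead is that this $s_{n+1}$ is preceded by an isolated $s_n$ rather than by another $s_{n+1}$, and that the block beginning at $m+1$ is an $s_{n+1}$ rather than an isolated $s_n$. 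Neither follows from the argument you give for case (a), and both again require the hypothesis $\mathcal{E}^{(m-1,m)}v\in\Omega$ (or an appeal to higher levels of the partition).

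There is also a problem with the base case. For the Fibonacci word with $m=0$ one has $(v_0(-1),v_0(0),v_0(1))=(1,0,1)$ (the paper uses this in Section 2.1), so the $(-1,0)$-partition around the bar reads $s_{-1}\,s_0\,\big|\,s_{-1}$, which matches neither (a) nor (b) as drawn, even though $\mathcal{E}^{(-1,0)}v_0=v_0'\in\Omega$. Hence your claimed enumeration cannot ``eliminate every arrangement except (a) and (b)'' at $n=0$: you must either start the induction at a higher level (for this example the alternation sets in at $n=1$) or interpret the pictures more loosely, and in either case the enumeration has to be carried out separately for $a_1=1$ and $a_1\ge 2$, since which of $00$, $11$ is admissible depends on whether $\alpha$ exceeds $\tfrac12$. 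As written, the proposal records the correct skeleton but omits precisely the case analysis that makes the lemma true.
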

Lemma \ref{n}
is proved by induction. 
Then 
Theorem \ref{local move}
follows from the fact that
$v_0$
(resp. $v'_0$)
is the right limit of 
$s_n$
and the left limit of 
$s_{2n}$
(resp. $s_{2n+1}$). 
We can also consider exchanging 
$s_k$
with 
$s_{k-1}$
somewhere in the 
$(k-1, k)$-partition of 
$v$
and the same result as Theorem \ref{local move} holds.

\vspace*{1em}

\noindent {\bf Acknowledgement }
The author 
would like to thank professors 
Shigeki Akiyama, Kazushi Komatsu and Ms. Hiroko Hayashi for discussions. 
This work is partially supported by 
JSPS grant Kiban-C no.18540125.

%%%%% REFERENCES %%%%%%%%%%%%%%%%%%%%%
%
\small

\end{document}